\DeclareMathSymbol{\shortminus}{\mathbin}{AMSa}{"39}
\newcommand\reallywidehat[1]{%
\savestack{\tmpbox}{\stretchto{%
  \scaleto{%
    \scalerel*[\widthof{\ensuremath{#1}}]{\kern.1pt\mathchar"0362\kern.1pt}%
    {\rule{0ex}{\textheight}}%WIDTH-LIMITED CIRCUMFLEX
  }{\textheight}% 
}{2.4ex}}%
\stackon[-6.9pt]{#1}{\tmpbox}%
}
\def\OO{{\mathcal O}}
\def\F{\mathcal{F}}
\def\E{\mathcal{E}}
\def\G{\mathcal{G}}
\def\I{\mathcal{I}}
\def\cP{\mathcal{P}}
\def\Pic0{{\rm Pic}^0}
\def\Aut0{{\rm Aut}^0}
\def\l{{\underline l}}
\def\*{{\underline *}}
\def\utheta{{\underline \theta}}
\def\udelta{{\underline \delta}}
\def\rk{\mathrm{rk}}
\theoremstyle{plain}
\newtheorem{theorem}{Theorem}[subsection]
\newtheorem{theoremalpha}{Theorem}
\newtheorem{corollaryalpha}[theoremalpha]{Corollary}
\newtheorem{propositionalpha}[theoremalpha]{Proposition}
\newtheorem{proposition/example}[theorem]{Proposition/Example}
\newtheorem{proposition}[theorem]{Proposition}
\newtheorem{corollary}[theorem]{Corollary}
\newtheorem{lemma}[theorem]{Lemma}
\theoremstyle{definition}
\newtheorem*{introremark}{Remark}
\newtheorem{remark}[theorem]{Remark}
\newtheorem{conjecture/question}[theorem]{Conjecture/Question}
\newtheorem{remark/definition}[theorem]{Remark/Definition}
\newtheorem{notation/assumptions}[theorem]{Assumptions/Notation}
\newtheorem{setting/notation}[theorem]{Setting and Notation}
\numberwithin{equation}{section}
\theoremstyle{remark}
\begin{document}

\title[Semihomogeneous vector bundles, $\mathbb Q$-twisted sheaves]{Semihomogenous vector bundles, $\mathbb Q$-twisted sheaves, duality,  and linear systems on abelian varieties}

%\title{Semihomogenous vector bundles, duality,  and linear systems on abelian varieties}

%\title{The $\mathbb Q$-graded section module of a polarized abelian variety}

%\title[]{Base point freeness and projective normality  on  abelian varieties via semihomogeneous vector bundles and duality }

%title{Semihomogeneous vector bundles, cohomological rank functions and duality on abelian varieties, with applications to base point freeness and normal generation}
 \author[N. Alvarado, G.Pareschi]{Nelson Alvarado and Giuseppe Pareschi}

\address{ Departamento de Matemáticas, Facultad de Ciencias, Universidad de Chile, Las Palmeras 3425,  Santiago\\Chile}
\email{nelson.alvarado@ug.uchile.cl}
\address{Department of Mathematics,
              University of Rome Tor Vergata\\Italy}
\email{pareschi@mat.uniroma2.it}
 \thanks{Both authors were partially supported by  the MIUR Excellence Department Project MatMod@TOV awarded to the Department of Mathematics of the University of Rome Tor Vergata. GP was also partially supported by the PRIN 2022 "Moduli spaces and Birational Geometry" and is a member of GNSAGA - INDAM}

%\subjclass[2010]{14K25; 32G20}
\begin{abstract} In this paper we point out the natural relation between $\mathbb Q$-twisted objects of the derived category of abelian varieties,  cohomological rank functions, and semihomogeneous vector bundles.   We apply this 
 to two basic classes of objects, corresponding  to each other via the Fourier-Mukai-Poincar\'e transform: positive twists of the ideal sheaf of one point and of the evaluation complexes of ample simple semihomogeneous vector bundles.  
This naturally leads  to the introduction of $\mathbb Q^{\ge 0}$- graded section modules associated to line bundles on abelian varieties built by means of semihomogeneous vector bundles (containing the usual section rings). We prove a duality relation between such  modules associated to dual polarizations, which is not visible at the level of the usual section rings. Other applications include formulas relating the thresholds of relevant cohomological rank functions appearing in this context.  As a consequence we show a lower bound for the base point free threshold of a polarization in function of its type, and some obstructions to surjectivity of multiplication maps of global sections of  certain line bundles.
\end{abstract}

\maketitle

%%%%%%%%%%%%%%%
%%%
After Mukai (\cite{mukai}) it has been increasingly clear that many geometric aspects of abelian  varieties can be approached by means of the Fourier-Mukai  transform associated to the Poincar\'e line bundle (FM transform for short). In this context  $\mathbb Q$-twisted sheaves (and more generally $\mathbb Q$-twisted objects of the bounded derived category)  appear naturally, giving rise to the cohomological rank functions introduced in \cite{ens} (see 
e.g.   \cite{caucci2}, \cite{ito-3-folds},  \cite{zhi-survey}, \cite{zhi3} for some geometric applications).    One of the purposes of this paper is to show the role of semihomogeneous vector bundles in this story. (Interestingly, semihomogeneous vector bundles play a prominent role also in Lahoz-Rojas' recent paper \cite{lr}, where cohomological rank functions on abelian surfaces are defined and studied via stability conditions.)

We work with polarized abelian varieties $(A, \l)$  on an algebraically closed field $k$ of characteristic zero.  Given $\lambda  \in\mathbb Q$, we denote  $E_{A,\lambda\l}$  a simple bundle in  Mukai's category $\mathbb S_{A,\lambda\l}$ of semihomogeneous vector bundles $E$ on $A$ such that $\frac {[\det E]} {\rk E}=\lambda\l$ in  $\mathrm{NS}(A)_\mathbb Q$ (\cite[\S6]{semihom}).

   The starting point of our paper (Proposition \ref{fractional} below) is the following:  \emph{for an object $\F\in D(A)$, the object $\F\otimes E_{A,\lambda\l}$  has the  cohomological behavior \emph{(namely: has the same cohomological rank functions)} of the $\mathbb Q$-twisted object $\F^{\,\oplus r}\langle\lambda\l\rangle$, where $
 r$ is  the rank of the bundle $E_{A,\lambda\l}$}.
 In this sense one can identify  (a suitable direct sum of copies of) $\F\langle \lambda\l\rangle$ with $\F\otimes E_{A,\lambda\l}$. 
 This has the advantage of dealing with a honest and explicit sheaf (although of much higher rank, since the rank of $E_{A,\lambda\l}$ is usually huge), or more generally object of $D(A)$, rather than a somewhat elusive  $\mathbb Q$-twisted sheaf.

 Here we develop some consequences of this observation in two specific cases: (a) the ideal sheaf of a closed point  (for example the origin), twisted by $\mathbb Q$-powers of an ample polarization, (b)  the twisted evaluation complex of the global sections of  ample simple semihomogeneous vector bundles. We will show that  the FM transform induces a precise cohomological relation between them.
 
 To put the subject into perspective, we recall that the vanishing threshold for the $h^1$ of   the ideal sheaf of any closed point twisted by rational powers of a polarization $\l$ (studied in Caucci's paper \cite{caucci}, and called there the \emph{base point freeness threshold}) measures how  far is  the polarization from having base points. By \cite[\S8]{ens} (generalized in Proposition \ref{s-beta-n} below) and \cite{caucci} such threshold is known to have a precise relation with a similar vanishing threshold for the hypercohomology of the evaluation complex of global sections of line bundles representing $\l$, which in turn measures properties as projective normality  and $N_p$ of $\l$. As observed by A. Ito and Z. Jiang, the base point freeness threshold can be investigated via birational geometry methods \`a la Anghern-Siu and Helmke. This provided a novel approach to projective normality and syzygies of abelian varieties, a classical and much studied topic, leading to results which were out of reach with the previous methods (see the works of Caucci (\cite{caucci}, \cite{caucci3}, Ito (\cite{ito-0}, \cite{ito-M-reg},\cite{ito-3-folds},\cite{ito3},\cite{ito4}) and Jiang (\cite{zhi-survey}, \cite{zhi2}).
 
 The purpose of this paper is to sharpen the above ideas in two different ways: \\ 
  (a) bringing semihomogeneous vector bundles into the picture. This offers an alternative way (different from the  birational geometry methods) for investigating the base point freeness thresholds. Namely to look for (generic) global generation criteria for semihomogeneous vector bundles, see Subsection \ref{subs:GG}.   \\
(b) Establishing a duality result between two pairs of thresholds:  on the one hand the vanishing threshold of the $h^0$ (respectively, of the $h^1$)  of the ideal sheaf of  a closed point of an abelian variety $A$, twisted by $\mathbb Q$-powers of a  polarization $\l$,  and, on the other hand, the vanishing threshold of the $h^1$ (resp. of the $h^0$) of the ideal sheaf of  a closed point of the dual variety $\widehat A$, twisted by $\mathbb Q$-powers of the dual polarization $\hat\l$. \\
 As application of these results:\\
 (c)  we provide some lower bounds on the base point freeness threshold. In turn these are translated, via Proposition \ref{s-beta-n} below, in obstructions to surjectivity of multiplication maps of global sections, hence, in particular, to projective normality. Interestingly,   the semihomogeneous bundles approach  brings naturally into the game the \emph{type} of the polarization $\l$, say $(d_1,\dots ,d_g)$. 

All this suggests the possibility of further applications of these methods, both to upper and lower bounds for the base point freeness threshold.

\subsection{$\mathbb Q$-graded modules and multiplication maps of global sections. } Let us turn to a more detailed exposition. 
 Although it will not appear explicitly in most of our results,  we will implicitly deal with  the \emph{$\mathbb Q$-graded section modules of a polarization}, defined as follows. Let $L$ be a line bundle representing the polarizaton $\l$. Let
\[ 
{\mathcal S}^\mathbb Q_{A}(L,\alpha):= \bigoplus_{\lambda\in \mathbb Q^{\ge 0}}H^0(A,E_{A,\lambda\l}\otimes P_\alpha),
\]
where $P_\alpha$ is the line bundle parametrized by $\alpha\in\widehat A:=\Pic0 A$ and $\{E_{A\lambda\l}\in\mathbb S_{A,\lambda\l}\}_{\lambda\in Q^{\ge 0}}$ is a compatible collection of simple semihomogeneous vector bundles, where compatible means that $E_{A,\lambda\l}\otimes L=E_{A,(\lambda+1)\l}$ for all $\lambda$. Clearly $E_{A,n\l}=L^{\otimes n}$ for $n\in \mathbb Z$. Therefore ${\mathcal S}^\mathbb Q_{A}(L,\OO_A)$ contains the section ring 
\[
\mathcal S_A(L)=\bigoplus_{n\in\mathbb N}H^0(A,L^{\otimes n})
\] and, as well as all $ {\mathcal S}^\mathbb Q_{A}(L,\alpha)$'s, it is a graded module over it. The interest of the graded modules
$ {\mathcal S}^\mathbb Q_{A}(L,\alpha)$ is that, in the above sense, they can be seen as representing such things as "modules of sections  of fractional powers of $L$".
%\[\bigoplus_{\lambda\in \mathbb Q} \OO_A^{\oplus r_{A,\l}(\lambda)}\langle\lambda\l\rangle .\]where: $r_{A,\l}(\lambda):=\rk E_{A,\lambda\l}$. 

We study the structure maps $\mathcal S_A(L)\otimes_k {\mathcal S}^\mathbb Q_{A}(L,\alpha)\rightarrow {\mathcal S}^\mathbb Q_{A}(L,\alpha)$, and specifically their graded components, namely the multiplication maps of global sections
\begin{equation}\label{multiplic}
m^{y}_{A,n\l, \alpha}: H^0(A,L^{\otimes n})\otimes H^0(A,E_{A,y\l}\otimes P_\alpha)\rightarrow H^0(A,E_{A, (n+y)\l}\otimes P_\alpha),
\end{equation}
which, as we will see, can be seen as "fractional" analogues of the usual multiplication maps 
\[m^{1}_{A,n\l, \alpha}:H^0(A,L^{\otimes n})\otimes H^0(A,L^{\otimes m})\rightarrow H^0(A,L^{\otimes n+m}\otimes P_\alpha).\footnote{This was also the point of view of the paper \cite{n-torsion} of the second author, where a bound to the number of $n$-torsion points on theta divisors of p.p.a.v.s. was obtained by an analysis of the maps $m^{n,(n-1)/ n}_{A,\alpha}$.}
\]
%Note that  ${\mathcal S}^\mathbb Q_{A}(L,\OO_A)$ is not a ring because  $E_{A,\nu\l}\otimes E_{A,\lambda\l}$ is not always simple. Despite this, also the multiplication maps 
%\begin{equation}\label{eq:multifrac}
%m_{A,\nu\l,\alpha}^{x,y}: H^0(A,E_{A,x\nu\l})\otimes H^0(A,E_{A, y\nu\l}\otimes P_\alpha)\rightarrow H^0(A,E_{A,x\nu\l}\otimes E_{A,y\nu\l}\otimes P_\alpha)\end{equation}
% play a significant technical role. 
%
Our analysis shows that, even if our  interest is in the properties of the section  ring $\mathcal S(A,L)$ (projective normality, syzygies..), the FM transform leads us to consider the  modules $ {\mathcal S}^\mathbb Q_{A}(L,\alpha)$. For example, denoting $M$ a line bundle  on the dual variety $\widehat A$ representing the dual polarization $\hat\l$, there is a precise  duality relation between the modules  $ {\mathcal S}^\mathbb Q_{A}(L,\alpha)$ and  $ {\mathcal S}^\mathbb Q_{\widehat A}(M,x)$, which is   not visible at the level of the section rings $\mathcal S(A,L)$ and $S(\widehat A,M)$. 

More generally, we will study the multiplication maps of global sections of simple semihomogeneous vector bundles
\begin{equation}\label{eq:multifrac}
m_{A,\nu\l,\alpha}^{y}: H^0(A,E_{A,\nu\l})\otimes H^0(A,E_{A, y\nu\l}\otimes P_\alpha)\rightarrow H^0(A,E_{A,\nu\l}\otimes E_{A,y\nu\l}\otimes P_\alpha).\end{equation}

\subsection{Thresholds. } We will mainly concerned with the following injectivity and surjectivity thresholds for the multiplication maps $m^{y}_{A,\nu\l, \alpha}$:
\begin{align*}
 s^0_A(\nu\l)&=\sup\{y\in\mathbb Q\>|\>m^{y}_{A,\nu\l, \alpha}\> \hbox{is injective} \>\forall \alpha\in\widehat A\},\\
 s^1_A(\nu\l)&=\inf\{y\in\mathbb Q\>|\>m^{y}_{A,\nu\l,\alpha}\> \hbox{is surjective} \>\forall \alpha\in\widehat A\}.
\end{align*}

\begin{introremark}The conditions $s^1_A(\l)<1$ (respectively $s^1_A(\l)>1$) mean  that the multiplication maps 
\begin{equation}\label{solite}
m_{A,\l,\alpha}^{1}:H^0(A,L)\otimes H^0(A,L\otimes P_\alpha)\rightarrow H^0(L^{\otimes 2}\otimes P_\alpha)
\end{equation}
are surjective (respectively, are not surjective) for all $\alpha\in \widehat A$, and they imply that the polarization $\l$ is (resp. is not) projectively normal. On the other hand $s^1_A(\l)=1$ means that the maps (\ref{solite}) are not surjective for $\alpha$ belonging to a proper closed subset $Z$ of the dual variety $\widehat A$. If this is the case, projective normality means that the identity point $\hat e$ does not belong to $Z$, and this may or may not happen. (See Subsection \ref{subs:GG} below). 
 For example: if $\utheta$ is a principal polarization then $s^1_A(2\utheta)=1$ but clearly $2\utheta$ is not projectively normal. By a result of Rubei (\cite{rubei}), the same happens for polarizations $2\l$, with $\l$ of type $(1,\dots , 2)$. On the other hand, as it will follow from Corollary \ref{cor} below, polarizations $\l$ of type $(1,4,\dots ,4)$ have $s^1_A(\l)\ge 1$. Based on computer calculations, \cite{fuentes} asserts that in dimension $3$ and $4$ the general such polarizations are projectively normal (therefore in particular $s^1_A(\l)=1$).  

\end{introremark}

The  other thresholds we are interested in are
\begin{align*}
 \beta^0_A(\l)&=\sup\{\lambda\in\mathbb Q\>|\> \I_e\langle\lambda\l\rangle\>\hbox{is IT(1)}\}\\
\beta^1_A(\l)&=\inf\{\lambda\in\mathbb Q\>|\> \I_e\langle\lambda\l\rangle\>\hbox{is IT(0)}\}.
\end{align*}
where $e$ is the identity point in $A$. For the notation about $\mathbb Q$-twisted sheaves and the vanishing conditions IT(i) (meaning "satisfies the index theorem with index $i$") we refer to Subsections \ref{subs: not(d)} and \ref{subs:Q}. Briefly, they are the extension to $\mathbb Q$-twisted sheaves of the well known index conditions for coherent sheaves on abelian varieties.  
The invariant $\beta^1_A(\l)$  is the above mentioned \emph{base point freeness threshold} (the relation with base point freeness stems from the fact that in any case $\beta^1_A(\l)\le 1$ and equality holds if and only if the line bundles representing $\l$ are not base point free).

\subsection{Summary of results. } 

\S2 provides some precise relations between semihomogeneous vector bundles, $\mathbb Q$-twisted sheaves, and cohomological rank functions. 

In \S3 we give an intepretation  of the thresholds $\beta^i_A(\l)$ in terms of semihomogeneous bundles. For example, in Proposition \ref{interpret} we show that, for $\lambda\in \mathbb Q^+$, we have that $\beta^1_A(\l)\le \lambda$ (resp. $\beta^1_A(\l)<\lambda$) if and only if the vector  bundle $E_{A,\lambda\l}$ is generically globally generated (resp. globally generated). 

Next, we prove the following duality result, obtained via the Fourier-Mukai transform, relating the above  thresholds of a given polarized abelian variety $(A,\l)$ and of its dual $(\widehat A,\hat\l)$ (\cite[\S 14,3]{birke-lange}, see also Subsection \ref{subs: not(b)} below).  We denote 
\[(d_1,\dots ,d_g)
\]  
the type of $\l$ (hence the type of $\hat\l$ is $(d_1,d_1d_g/d_{g-1},\dots ,d_1d_g/d_2,d_g)$).

\begin{theoremalpha}\label{duality1} For $i=0,1$
\begin{equation}\label{duality1-1}
\beta^i_A(\l)=\frac 1 {\beta^{1-i}_{\widehat A}(\hat\l)d_1d_g}\>.
\end{equation}
\end{theoremalpha}
\noindent(This a corollary of the more general Lemma \ref{duality} and Theorem \ref{exchange2} below, where the duality is formulated at the level of Fourier-Mukai transform and cohomological rank functions).

It is well known from \cite{ens} that, via the Fourier-Mukai transform,  for integral values of $\nu$ the  thresholds $s^i_A(\nu\l)$  and the thresholds $\beta^i_A(\l)$ are related by a precise formula. In \S\ref{sect:multip} we generalize this to rational values of $\nu$, proving the following Proposition (we refer to Corollary \ref{cor:fract} and the subsequent remarks for some precisations about the statement).
\begin{propositionalpha}\label{s-beta-n} For $\nu\in\mathbb Q^+$
\[s^i_A(\nu\l)=\frac {\beta^i_A(\l)} {\nu-\beta^i_A(\l)}.
\]
\end{propositionalpha}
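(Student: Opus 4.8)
The plan is to reduce the statement to the known integral case via the identification of $\F\langle\lambda\l\rangle$ with $\F\otimes E_{A,\lambda\l}$ from Proposition \ref{fractional}, and then to run the Fourier–Mukai dictionary relating multiplication maps to $IT$-type vanishing on the dual side. Concretely, I would first rewrite the multiplication map $m^{1,y}_{A,\nu\l,\alpha}$ in terms of the evaluation sequence of $H^0(A, E_{A,y\nu\l}\otimes P_\alpha)$. The cokernel and kernel of such a multiplication map are governed, after applying the Fourier–Mukai–Poincaré transform, by the cohomology of a twist of the ideal sheaf $\I_e$: this is exactly the mechanism of \cite{ens}, where the thresholds $s^i_A(n\l)$ for $n\in\ZZ$ are expressed in terms of $\beta^i_A(\l)$. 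The new input here is that tensoring by $E_{A,\lambda\l}$ — which by Proposition \ref{fractional} does not change cohomological rank functions beyond a rank multiplication — lets us replace the integer $n$ by a rational number $\nu$ throughout that argument.

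The key steps, in order, are: (1) express, for $\F\in D(A)$, the relevant multiplication map $H^0(L^{\otimes n})\otimes H^0(\F)\to H^0(\F\otimes L^{\otimes n})$ (and its fractional analogue) via the Fourier–Mukai transform as a map whose injectivity/surjectivity is controlled by $IT(0)$/$IT(1)$ of a $\mathbb Q$-twist of $\I_e$; (2) invoke Proposition \ref{fractional} to see that the bundle $E_{A,\nu\l}$ has, for the purpose of these cohomological thresholds, the effect of the $\mathbb Q$-twist $\langle\nu\l\rangle$, so that the condition defining $s^i_A(\nu\l)$ becomes a condition on $\I_e\langle y\nu\l\rangle$ after the appropriate transform; (3) trace through the change of polarization under Fourier–Mukai (the dual polarization $M$ on $\widehat A$, with the relation $\widehat{L^{\otimes n}}$ being a vector bundle that is, up to semihomogeneity, $M^{\otimes(-1/n)}$-twisted), so that the inequality "$\I_e\langle\lambda\l\rangle$ is $IT(i)$" translates to "$m^{1,y}_{A,\nu\l,\alpha}$ has the desired property" precisely when $\lambda = y\nu/(1+y)$, or equivalently $y = \lambda/(\nu-\lambda)$; (4) take sup/inf over the allowed range and read off $s^i_A(\nu\l) = \beta^i_A(\l)/(\nu - \beta^i_A(\l))$.

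The main obstacle I expect is bookkeeping step (3): getting the exact rational relation between the twisting parameters right, keeping careful track of how ranks, pullbacks by multiplication-by-$n$ isogenies, and the dual polarization interact under the Fourier–Mukai transform — in particular ensuring that the "compatible collection" condition $E_{A,\lambda\l}\otimes L = E_{A,(\lambda+1)\l}$ is what makes the graded-module structure maps match the $\mathbb Q$-twisted picture, and that no spurious factor of $\nu$ or sign creeps into the exponent. A secondary subtlety is the behavior at the boundary: the sup and inf in the definitions of $s^i_A$ and $\beta^i_A$ need not be attained, so one must argue that the monotone bijection $\lambda \mapsto \lambda/(\nu-\lambda)$ (increasing on the relevant interval, since $\nu > \beta^i_A(\l)$ always holds as $\beta^i_A(\l)\le 1 \le \nu$ in the cases of interest, with the general case handled by the same algebra) carries the defining set for $\beta^i_A(\l)$ exactly onto the defining set for $s^i_A(\nu\l)$ — this is presumably the content of the "precisations" referred to in Corollary \ref{cor:fract}. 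Once the transform is set up correctly, the remaining verification is the elementary algebraic identity $y = \lambda/(\nu-\lambda) \iff \lambda = y\nu/(1+y)$, which requires no real work.
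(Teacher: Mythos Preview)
Your proposal is essentially the paper's approach: the paper also deduces the statement (as Corollary \ref{cor:fract}) from a cohomological rank function identity, Proposition \ref{prop:main-rel-pre}, obtained by combining Proposition \ref{fractional} with a Fourier--Mukai computation, and then reading off the thresholds via the monotone change of variable $y=\lambda/(1-\lambda)$ (equivalently your $\lambda = y\nu/(1+y)$ after rescaling by $\nu$). Two small corrections: the paper's Fourier--Mukai step does \emph{not} pass to the dual variety and the dual polarization as your step (3) suggests, but rather applies $\varphi_{a\l}^*\Phi_{\cP}$ to the restriction map $E_{A,\nu\l}\to E_{A,\nu\l}\otimes k(e)$ and identifies the result directly with the twisted evaluation complex on $A$ (this is the content of Lemma \ref{lem:fractional}, which is the precise form of the ``bookkeeping'' you anticipate as the main obstacle); and your parenthetical claim that ``$\beta^i_A(\l)\le 1\le \nu$ in the cases of interest'' is not assumed---$\nu$ is an arbitrary positive rational, and the case $\nu\le\beta^i_A(\l)$ is handled separately (Remark \ref{rem:infty}) by interpreting the right-hand side as $+\infty$.
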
 
 (We point out that also this is a consequence of a more general  result relating cohomological rank functions, see Proposition \ref{prop:main-rel-pre}).  

%\begin{equation}\label{s-beta}
%s_i(\l)=\frac {\beta_i(\l)} {1-\beta_i(\l)}.
%\end{equation}

%The identity (\ref{s-beta}) holds always for $i=0$ while for $i=1$ it holds under the assumption that $\l$ is base point free, i.e. $\beta^1(\l)<1$ (for $\beta(\l)=1$ the map $m_{1,\lambda,\alpha }$ is never surjective?). Note that, by definition, $\beta^i_A(n\l)=\frac 1 n \beta^i_A(\l)$. It follows from (\ref{s-beta}) that 
%\begin{equation}\label{s-beta-n}
%s_i(n\l)=\frac {\beta_i(\l)} {n-\beta_i(\l)}.
%\end{equation}

 As a consequence of these results we obtain the  already mentioned    duality formula
 relating the dimensions of kernel and cokernel of suitable graded components of the structure maps of the $\mathbb Q^{\ge 0}$-graded section modules associated to dual polarizations (Corollary \ref{expression2}). 

Going back to Theorem \ref{duality1}, combining it with Proposition \ref{s-beta-n} we have the following relation between the threshold $\beta^1$ of a polarization and the threshold $s^0$ of rational multiples of the dual polarization
\begin{corollaryalpha}\label{corollary} For every $\nu\in\mathbb Q^+$,
\begin{equation}\label{duality1-2}
\beta^1_A(\l)=\frac{s^0_{\widehat A}(\nu\hat\l)+1} {s^0_{\widehat A}(\nu\hat\l)d_1d_g\nu}\>.
\end{equation}
\end{corollaryalpha}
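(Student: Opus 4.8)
The plan is to combine the two identities already established --- Proposition~\ref{s-beta-n} (applied on the dual side) and Theorem~\ref{duality1} --- and eliminate the intermediate invariant $\beta^0_{\widehat A}(\hat\l)$ by an elementary manipulation.

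First I would apply Proposition~\ref{s-beta-n} to the polarized abelian variety $(\widehat A,\hat\l)$, with index $i=0$ and with the given $\nu\in\mathbb Q^+$, obtaining
\[
s^0_{\widehat A}(\nu\hat\l)=\frac{\beta^0_{\widehat A}(\hat\l)}{\nu-\beta^0_{\widehat A}(\hat\l)}.
\]
Clearing denominators and collecting the terms in $\beta^0_{\widehat A}(\hat\l)$ yields
\[
\beta^0_{\widehat A}(\hat\l)=\frac{\nu\,s^0_{\widehat A}(\nu\hat\l)}{1+s^0_{\widehat A}(\nu\hat\l)}.
\]

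Next I would feed this into Theorem~\ref{duality1} in the case $i=1$, which reads $\beta^1_A(\l)=\bigl(\beta^0_{\widehat A}(\hat\l)\,d_1d_g\bigr)^{-1}$. Substituting the expression just obtained gives
\[
\beta^1_A(\l)=\frac{1}{d_1d_g}\cdot\frac{1+s^0_{\widehat A}(\nu\hat\l)}{\nu\,s^0_{\widehat A}(\nu\hat\l)}=\frac{s^0_{\widehat A}(\nu\hat\l)+1}{s^0_{\widehat A}(\nu\hat\l)\,d_1d_g\,\nu},
\]
which is the asserted identity \eqref{duality1-2} (the invariant appearing on the right is $s^0$ of the \emph{dual} polarization $\hat\l$, the type-dependence having already been absorbed into the factor $d_1d_g$ by Theorem~\ref{duality1}).

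Since the argument is a two-line substitution, there is no genuine obstacle; the only thing to be careful about is the bookkeeping of which abelian variety and which polarization each threshold refers to, together with the treatment of the degenerate values. Namely, the identity should be read within $\mathbb Q^{+}\cup\{+\infty\}$: if $\beta^0_{\widehat A}(\hat\l)=0$ then $s^0_{\widehat A}(\nu\hat\l)=0$ and both sides are $+\infty$, while the opposite extreme is handled symmetrically. These conventions are exactly those already fixed in Proposition~\ref{s-beta-n} and in the remarks following Corollary~\ref{cor:fract}, so nothing new is needed.
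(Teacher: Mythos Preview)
Your proof is correct and follows essentially the same route as the paper: the paper derives the corollary by combining Corollary~\ref{exchange2} (i.e.\ Theorem~\ref{duality1}) with Corollary~\ref{cor:fract} (i.e.\ Proposition~\ref{s-beta-n}) applied on the dual side, exactly as you do, and the same two-step substitution appears explicitly in the proof of Corollary~\ref{cor:precise}. Your remark on the degenerate values matches the paper's Remark~\ref{rem:infty}, and your parenthetical observation that the threshold in the denominator is $s^0_{\widehat A}(\nu\hat\l)$ (not $\nu\l$) is also right---this is a typo in the displayed formula.
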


%Equivalently, in the case of base point free polarization ( $\l$ o $\hat \l$?))\begin{equation}\label{duality1-2} s^i_A(\l)=\frac{s^{i-1}_{\widehat A}(\hat\l)+1}{d_1d_gs^{i-1}_{\widehat A}(\hat\l)(d_1d_g-1)-1}\end{equation} 

 After Corollary \ref{corollary} a surprising approach for producing a lower bound for the threshold $\beta_A^1(\l)$ -- i.e. an obstruction to base point freeness and, via Proposition \ref{s-beta-n}, to \emph{surjectivity} of multiplication maps of global sections -- is to look for  upper bounds for the threshold $s^0_{\widehat A}(\nu\hat\l)$, i.e.  obstructions to \emph{injectivity} of multiplication maps of global sections on the dual variety. From basic facts of generic vanishing theory it follows that this can be done simply by finding   semihomogeneous vector bundles (not necessarily simple) on $\widehat A$, say $F\in\mathbb S_{\widehat A,\nu\hat\l}$ and  $G\in \mathbb S_{\widehat A, y\nu\hat\l}$,  such that a multiplication map of global sections
$m^y_{\widehat A,\nu\hat l,  a}$   (\ref{eq:multifrac}) is not injective for some $a\in A$. 

%%%%%%%%%%%

An elementary way to do that is as follows: as soon as $E_{\widehat A,\nu\hat\l}$ is generically globally generated, letting $r=\rk 
E_{\widehat A,\nu\hat\l}$,   the image of the linear space $\wedge^{r+1}H^0(\widehat A, E_{\widehat A,\nu\hat\l})$ in $ H^0(\widehat A, E_{\widehat A,\nu\hat\l})\otimes H^0(\widehat A, \wedge^{r}E_{\widehat A,\nu\hat\l})$ is a non-trivial subspace mapping to zero via the multiplication map of global sections. Clearly the outcome of this approach depends on the rank $r$. Therefore we are lead to consider  the \emph{rank function} of a given polarization $\l$:
\[
r_{A, \l}:\mathbb Q_{>0}\rightarrow \mathbb N, \quad  r_{A,\l}(\nu)=\rk\, E_{A,\nu\l}.
\] 
In the this way we prove the following lower bound for the base point freeness threshold
  (we refer also to Corollary \ref{cor:precise} below  for a stronger variant).

\begin{theoremalpha}\label{obstruction} In the above setting and notation
\begin{equation}\label{sup}
\beta^1_A(\l)\ge \sup_{\nu>\frac 1 {\sqrt[g]{\chi(\hat\l)}}}\>\Bigl\{\frac 1 {d_1d_g\nu}\bigl(1+\frac 1 {r_{\widehat A,\hat\l}(\nu)}\bigr)\Bigr\}.
\end{equation}
\end{theoremalpha}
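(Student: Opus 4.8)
The plan is to combine Corollary~\ref{corollary} with an explicit, exterior‑power, construction of a non‑injective multiplication map on the dual side. By Corollary~\ref{corollary}, for every $\nu\in\mathbb Q^{+}$ one has
\[
\beta^{1}_{A}(\l)=\frac{s^{0}_{\widehat A}(\nu\hat\l)+1}{s^{0}_{\widehat A}(\nu\hat\l)\,d_{1}d_{g}\nu}=\frac{1}{d_{1}d_{g}\nu}\Bigl(1+\frac{1}{s^{0}_{\widehat A}(\nu\hat\l)}\Bigr),
\]
where necessarily $s^{0}_{\widehat A}(\nu\hat\l)>0$ (otherwise the right‑hand side would be infinite). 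Since $t\mapsto 1+1/t$ is decreasing on $\mathbb Q^{>0}$, the theorem will follow once one proves
\[
s^{0}_{\widehat A}(\nu\hat\l)\ \le\ r_{\widehat A,\hat\l}(\nu)\qquad\text{for every }\nu>\tfrac{1}{\sqrt[g]{\chi(\hat\l)}},
\]
and then passes to the supremum over all such $\nu$.

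To this end I would fix $\nu>1/\sqrt[g]{\chi(\hat\l)}$ and set $E:=E_{\widehat A,\nu\hat\l}$, $r:=r_{\widehat A,\hat\l}(\nu)=\rk E$. Since $\nu>0$, $E$ is IT$(0)$, hence $h^{0}(E)=\chi(E)$; and $\chi(E)=r\,\nu^{g}\chi(\hat\l)$, because semihomogeneous bundles have exponential Chern character. The hypothesis on $\nu$ is exactly the inequality $\nu^{g}\chi(\hat\l)>1$, i.e.\ $\chi(E)>r$, so (being an integer) $h^{0}(E)\ge r+1$. Moreover, by the properties of semihomogeneous bundles recalled in Section~\ref{sect:multip}, under the same hypothesis $E$ is generically globally generated, so the evaluation map $\ev_{p}\colon H^{0}(E)\to E_{p}$ is surjective for $p$ in a dense open subset of $\widehat A$.

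Then I would invoke the elementary identity that, for a $k$‑vector space $V$ with $\dim V=r$ and any $v_{0},\dots,v_{r}\in V$,
\[
\sum_{i=0}^{r}(-1)^{i}\,v_{i}\otimes\bigl(v_{0}\wedge\cdots\wedge\widehat{v_{i}}\wedge\cdots\wedge v_{r}\bigr)=0\quad\text{in }V\otimes\wedge^{r}V
\]
(it is the image of $v_{0}\wedge\cdots\wedge v_{r}$ under the comultiplication $\wedge^{r+1}V\to V\otimes\wedge^{r}V$, which vanishes since $\wedge^{r+1}V=0$). Applying this to the fibres of $E$ (which has rank $r$, so $\wedge^{r}E=\det E$), the composition of the Koszul differential $\wedge^{r+1}H^{0}(E)\to H^{0}(E)\otimes\wedge^{r}H^{0}(E)$, of $\id\otimes\rho$ — where $\rho\colon\wedge^{r}H^{0}(E)\to H^{0}(\det E)$ is the natural ``wedge'' map — and of the multiplication of global sections
\[
\mu\colon H^{0}(E)\otimes H^{0}(\det E)\longrightarrow H^{0}(E\otimes\det E)
\]
is the zero map; hence the image of $(\id\otimes\rho)$ composed with the Koszul differential lies in $\ker\mu$. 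That image is nonzero: choosing $v_{0},\dots,v_{r-1}\in H^{0}(E)$ whose values at a general point $p$ form a basis of $E_{p}$, together with $v_{r}\in H^{0}(E)$ not in their linear span (possible since $h^{0}(E)\ge r+1$), the $v_{i}$ are linearly independent, $\rho(v_{0}\wedge\cdots\wedge v_{r-1})\ne 0$ (it does not vanish at $p$), and therefore $\sum_{i}(-1)^{i}v_{i}\otimes\rho(v_{0}\wedge\cdots\wedge\widehat{v_{i}}\wedge\cdots\wedge v_{r})\ne 0$. Thus $\ker\mu\ne 0$.

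Finally, $\det E$ represents the integral class $r\nu\hat\l$, hence $\det E\cong E_{\widehat A,r\nu\hat\l}\otimes P_{\alpha}$ for a suitable $\alpha\in A$; under this identification $\mu$ is precisely the map $m^{1,r}_{\widehat A,\nu\hat\l,\alpha}$ of~(\ref{eq:multifrac}), which is therefore not injective. By the general results on cohomological rank functions of Section~\ref{sect:multip} (Proposition~\ref{prop:main-rel-pre}), the property ``$m^{1,y}_{\widehat A,\nu\hat\l,\beta}$ injective for all $\beta$'' holds exactly for $y$ in an initial interval whose right endpoint is $s^{0}_{\widehat A}(\nu\hat\l)$; since it fails at $y=r$ we obtain $s^{0}_{\widehat A}(\nu\hat\l)\le r=r_{\widehat A,\hat\l}(\nu)$, which together with the reduction of the first paragraph proves~(\ref{sup}). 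The hard part will be exactly this last deduction — turning the existence of a \emph{single} non‑injective map $m^{1,r}_{\widehat A,\nu\hat\l,\alpha}$ into a bound on the threshold $s^{0}_{\widehat A}(\nu\hat\l)$ — which rests on the interval structure of the injectivity locus supplied by the cohomological rank function formalism; note that the role of the hypothesis $\nu>1/\sqrt[g]{\chi(\hat\l)}$ is precisely to force $h^{0}(E_{\widehat A,\nu\hat\l})\ge r+1$ and generic global generation of $E_{\widehat A,\nu\hat\l}$, so that the construction is non‑vacuous. (A finer analysis, using lower exterior powers and Koszul‑type syzygies, sharpens the bound; cf.\ Corollary~\ref{cor:precise}.)
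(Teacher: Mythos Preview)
Your overall strategy matches the paper's exactly: reduce via Corollary~\ref{corollary} to the bound $s^{0}_{\widehat A}(\nu\hat\l)\le r_{\widehat A,\hat\l}(\nu)$, and prove the latter by exhibiting a nonzero element of $\ker\mu$ coming from $\wedge^{r+1}H^{0}(E)$ through the Koszul comultiplication. Your verification that this element is nonzero (picking $v_{0},\dots,v_{r-1}$ whose values at a general point are a basis, plus an extra $v_{r}$) is a pleasant and more concrete alternative to the paper's dualization argument.

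There is, however, a genuine gap. You assert that the hypothesis $\nu>1/\sqrt[g]{\chi(\hat\l)}$ forces $E=E_{\widehat A,\nu\hat\l}$ to be generically globally generated. This is not true. What that hypothesis gives you is $\chi(E)>r$, equivalently $\nu>\beta^{0}_{\widehat A}(\hat\l)$; but generic global generation is the condition $\nu\ge\beta^{1}_{\widehat A}(\hat\l)$, and in the interval $\beta^{0}_{\widehat A}(\hat\l)<\nu<\beta^{1}_{\widehat A}(\hat\l)$ the evaluation map of $E$ has both nonzero generic kernel and nonzero generic cokernel (this is exactly what Subsection~\ref{subs:interpretation} says). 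For such $\nu$ you cannot find sections $v_{0},\dots,v_{r-1}$ whose values at any point span the fibre, and your construction of a nonzero kernel element collapses.

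The paper's Lemma~\ref{relations} repairs this with a case split. If $\nu\ge\beta^{1}_{\widehat A}(\hat\l)$, your argument (equivalently, the paper's determinant argument) goes through. If instead $\beta^{0}_{\widehat A}(\hat\l)<\nu<\beta^{1}_{\widehat A}(\hat\l)$, let $r'<r$ be the generic rank of the evaluation map of $E$; then the same Koszul trick, now with $\wedge^{r'}E$ in place of $\det E$, produces a non-injective multiplication map
\[
H^{0}(E)\otimes H^{0}(\wedge^{r'}E)\longrightarrow H^{0}(E\otimes\wedge^{r'}E).
\]
Here $\wedge^{r'}E$ is no longer a line bundle, nor simple, but it is still semihomogeneous in $\mathbb S_{\widehat A,r'\nu\hat\l}$ (as a summand of $E^{\otimes r'}$), and Remark~\ref{rem:any} says that non-injectivity against \emph{any} bundle in $\mathbb S_{\widehat A,r'\nu\hat\l}$ is enough to conclude $s^{0}_{\widehat A}(\nu\hat\l)\le r'<r$. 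You need this second case (and the appeal to Remark~\ref{rem:any}) to close the argument.
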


In turn, the key point in applying this result is that, by a fundamental result of Mukai \cite[Theorem 7.11(5)]{semihom} the rank  function $r_{\widehat A,\hat\l}$ is  determined by the type of $\l$. In fact, writing in lowest terms $\nu=\frac a b$, the rank of $E_{A,\nu\l}$ is very big, namely $b^g$, if $d_g$ is coprime with $b$, but is much smaller if $gcd(b,d_i)>1$ for low $i$ (see Subsection \ref{subs:semihom}).  
In this way Theorem \ref{obstruction} provides a bound which in certain cases is meaningful, and sometimes sharp.  A more systematic analysis  seems possible and this would provide other non-trivial upper bounds of the base-point freeness threshold.

Particular cases of Theorem \ref{obstruction} are found in the following
\begin{corollaryalpha}\label{cor} Let $(d_1,\dots ,d_g)$ the type of $\l$. 
\begin{enumerate}
\item  If $d_g>1$ then 
\[
\beta^1_A(\l)\ge \frac 2 {d_g}. \quad \hbox{i.e. \ } s^1_A(\l)\ge \frac 2 {d_g-2}
\]
Particular cases
 \begin{enumerate}
 \item (\cite[Corollary 2.6]{nr}) If $d_g= 2$  then 
 \[\beta_A^1(\l)=1 \quad\hbox{i.e. \ $\l$ has base points};
 \]
 \item if $d_g=3$
 then 
 \[
 \beta^1_A(\l)\ge \frac 2 3\quad\hbox{i.e.\ } s^1_A(\l)\ge 2;
 \]

\item If $d_g=4$  then  
\[\beta^1_A(\l)\ge \frac 1 2\quad\hbox{i.e. \ } s^1_A(\l)\ge 1;
\]
\end{enumerate}

 \item if $(d_1,\dots ,d_g)=(1,n,\dots n,nm) $ with $m<n$ then
 \[
\beta^1_A(\l)\ge \frac {1+m}{mn}\quad\hbox{i.e. \ } s^1_A(\l)\ge\frac{1+m}{(n-1)m-1}
\]
 In particular if  $(d_1,\dots d_g)=(1,3,\dots,3,6)$ then 
\[\beta^1_A(\l)\ge \frac 1 2\quad\hbox{i.e. \ } s^1_A(\l)\ge 1.
\]

\end{enumerate}
\end{corollaryalpha}

 The equivalence of the formulations in terms of $\beta^1_A(\l)$ and in terms of $s^1_A(\l)$ follows from Proposition \ref{s-beta-n} for $\nu=1$ (which was already in \cite[Corollary 8.2]{ens}). As pointed out in the above Remark, the condition $s^1_A(\l)\ge 1$ does not say that $\l$ is not projectively normal, but only that there are $\alpha\in\widehat A$ such that the multiplication map (\ref{solite}) is not surjective. Similarly, the condition $s^1_A(\l)\ge 2$ means that for some $\alpha\in\widehat A$ the map $m^1_{A,2\l,\alpha}$ of (\ref{multiplic}) is not surjective. A better understanding of this point is an open problem (see Subsection \ref{base-loci}). 
For example the previous result concerning polarizations of type $(1,3,\dots,3,6)$  does not imply that such polarizations are not projective normal.  This is still unknown even in low dimension, where computer calculations are available  (\cite{fuentes}).

Finally, the last section is about possible natural developments of the present work.

%%%%%%%%%%%%%%%

\section{Notation and background material. }\label{notation} 

We work on an algebraically closed ground field $k$ of characteristic zero. 

\subsection{Translation and multiplication by an integer. }\label{subs: not(a)} Given an abelian variety $A$, and a closed point $x\in A$, we denote $t_x:A\rightarrow A$ the translation by $x$. 
Moreover, given a positive integer $b$ we denote
\[
b_A:A\rightarrow A
\]
the multiplication-by-$b$ isogeny and $A[b]$ its kernel, i.e. the group of $b$-division points of $A$.

\subsection{Dual variety and Fourier-Mukai-Poincar\'e transform. }\label{subs: not(b)} We denote $\widehat A:=\Pic0 A$ the dual abelian variety and $\mathcal \cP$ the Poincar\'e line bundle on $A\times\widehat A$. Given $\alpha\in\widehat A$ (respectively $x\in A$), the corresponding line bundle on $A$ (resp. on $\widehat A$) is denoted $P_\alpha$ (resp. $P_x$). The Fourier-Mukai equivalences associated to  $\cP$ (see \cite{mukai})  are denoted
\[
\Phi_\cP:D(A)\rightarrow D(\widehat A)\qquad \Psi_{\cP}:D(\widehat A)\rightarrow D(A)
\]
(where $D( \cdot )$ is the bounded derived category). Similarly, we denote $\Phi_{\cP^\vee}$ and $\Psi_{\cP^\vee}$ the Fourier-Mukai equivalences associated to $\cP^\vee$.  A basic property (see (\cite[(3.1)]{mukai}) is that 
\begin{equation}\label{translation}
\Phi_{\cP}(t_x^*\F)=\Phi_{\cP}(\F)\otimes P_x.
\end{equation}
 The following formula (\cite[Lemma 2.1]{pp2}) will be useful. For all objects $\F\in D(A)$ and $\G\in D(\widehat A)$ 
\begin{equation}\label{exchange}
H^i(A,\F\underline\otimes\Psi_{\cP}(\G))\cong H^i(\widehat A, \Phi_{\cP}(\F)\underline\otimes\G).
\end{equation}

Finally, using the Fourier-Mukai transform we will sometimes use the dualization functor in the following form:
\begin{equation}\label{dualization}
\F^\vee:=R\mathcal Hom(\F,\OO_A)
\end{equation}

\subsection{Polarizations. }\label{subs: not(c)} Given an ample line bundle $L$ on $A$ we will denote $\l$ its class in the N\'eron-Severi group, and 
\[\varphi_{\l}:A\rightarrow \widehat A
\]
the corresponding isogeny (namely $\varphi_\l(x)=t_x^*L\otimes L^{-1}$). The type of $\hat\l$ is  denoted $(d_1,\dots ,d_g)$. \\
The \emph{dual polarization of $\l$} (see \cite[\S 14.3]{birke-lange}) is  denoted $\hat\l$. It is a polarization on $\widehat A$  of type $(d_1,d_1d_g/d_{g-1},\dots ,d_1d_g/d_2,d_g)$. Therefore
\begin{equation}\label{dual1}
\chi(\l)\chi(\hat \l)=(d_1d_g)^g.
\end{equation}
Other useful properties (found in \cite[Proposition 14.4.1]{birke-lange}) are
\begin{equation}\label{dual2}\varphi_{\hat\l}\varphi_\l=(d_1d_g)_A
\end{equation}
\begin{equation}\label{dual3}
\varphi_\l^*\hat\l=d_1d_g\l \>.
\end{equation}

\subsection{Vanishing conditions. } \label{subs: not(d)} 
Given a coherent sheaf $\F$ on $A$, or, more generally, an object of $D(A)$, its cohomological support loci are the subvarieties
\begin{equation}\label{eq:coho}
V^i(A,\F)=\{\alpha\in\widehat A\>|\> h^i(A,\F\otimes P_\alpha)>0\}.
\end{equation}
 Given $i\in \mathbb Z$, $\F$ is said to \emph{satisfy the index theorem with index $i$ (or to be IT(i) for short) } if 
 \[V^j(A,\F)=\emptyset\quad\hbox{for all $j\ne i$. }
 \]
  If this happens then  $\Phi_{\cP}(\F)=R^i\Phi_{\cP}(\F)[-i]$ and the sheaf $R^i\Phi_{\cP}(\F) $ is locally free. 
 %(The same holds for   $\Phi_{\cP^\vee}(\F)$.) 
 If this is the case we will adopt the following notation
\begin{equation}\label{hat}
\widehat\F:=R^i\Phi_{\cP}(\F).
\end{equation}
Another type of vanishing condition is \emph{generic vanishing}. A coherent sheaf $\F$ (or, more generally, an object of $D(A)$ such that $V^i(A,\F)=\emptyset $ for $i<0$)  is said to be $GV$ if the following condition holds
\[\mathrm{codim}_{\Pic0 A}V^i(A,\F)\ge i\quad\hbox{for all $i\ge 0$.}
\]

\subsection{Simple semihomogeneous vector bundles. }\label{subs:semihom} Semihomogeneous  vector bundles on an abelian variety $A$ are  vector bundles $E$ such that for each $x\in A$ there exists a point $\alpha \in \Pic0 A$ such that $t_x^*A\cong A\otimes P_\alpha$. 
The main reference for this topic  is the fundamental paper of Mukai \cite{semihom}. Here we recall the results of Mukai needed in the sequel. As above, let $L$ be a line bundle on an abelian variety $A$, $\l$  its N\'eron-Severi class, and $\lambda$ a rational number. Following Mukai, we consider  semihomogeneous vector bundles $E$ such that 
\[\frac{[\det E]}{rk(E)}=\lambda\l.
\] The set of such vector bundles is denoted $\mathbb S_{A,\lambda \l}$ (\cite[\S 6]{semihom}). 

We are mainly interested in \emph{simple} vector bundles in $\mathbb S_{A,\lambda \l}$.  Their main properties are the content  of \cite[\S7]{semihom}. We recall some of them below. 

\noindent
\emph{(a) } For all $\lambda\in \mathbb Q$ and $\l$ in $NS(A)$ there exists a simple vector bundle $E_{A,\lambda\l}$ in $\mathbb S_{A,\lambda\l}$ (\cite[Theorem 7.11(1)]{semihom}). Such simple vector bundles are unique up to tensorization with a $P_\alpha$, for $\alpha\in\widehat A$ (\cite[Proposition 6.17]{semihom}).

\noindent \emph{(b) } Write $\lambda=\frac a b$ (with $b>0$) and denote $A[b]$ the group of $n$-divison point in $A$. Let 
\[
u_{A,\l}^2(a,b):=ord(A[b]\cap K(a\l)).
\]
  Then, by \cite[Theorem 7.11(5)]{semihom}, we have that
\begin{equation}\label{rk/dim}
r_{A,\l}(\lambda):= \rk \, E_{A,\lambda\l}=\frac {b^g} {u_{A,\l}(a,b)}, \qquad \chi(E_{A,\lambda\l})=\frac {\chi(a\l)} {u_{A,\l}(a,b)}=\frac{a^g\chi(\l)}{u_{A,\l}(a,b)}
 \end{equation} 
 
 \noindent \emph{(c) }  There exists an isogeny $\pi:B\rightarrow A$ and a line bundle $M$ on $B$ such that $\pi^*E_{A,\lambda\l}\cong M^{\oplus r}$ (\cite[Proposition 7.3]{semihom}). \\
 By  \cite[Proposition 7.6(2)]{semihom} such isogenies are exactly those factoring trough $p:\Phi(E_{A,\lambda\l})\rightarrow A$, where, given  any vector bundle $E$ on an abelian variety $A$, $\Phi(E)\subset A\times \widehat A$ is   the subgroup associated  to  $E$ in \cite[Definition 3.5]{semihom}, and $p$ is the first projection restricted to $\Phi(E)$. In turn, \cite[Theorem 7.11(3)]{semihom} asserts that, writing $\lambda=\frac a b$,
\begin{equation}\label{Phi}
\Phi(E_{A,\lambda\l})={\rm Im}(b_A,\varphi_{a\l}):A\rightarrow A\times A^\vee.
\end{equation}

\noindent \emph {(d)} Any $F\in\mathbb S_{A,\lambda_\l}$, not necessarily simple, is of the form $F\cong \oplus_i F_i$ where $F_i\in \mathbb S_{A,\lambda\l}$ is a \emph{$E_{A,\lambda\l}\otimes P_{\alpha_i}$-potent bundle} (where $\alpha_i\in\Pic0 A$), i.e. has a filtration $0\subset G_1\subset \cdots\subset G_{k-1}\subset G_k=F_i$ with $G_j/G_{j-1}\cong E_{A,\lambda\l}\otimes P_{\alpha_i}$,  (\cite[Proposition 6.18, Theorem 7.11(2)]{semihom}). 

%%%%%
\subsection{$\mathbb Q$-twisted sheaves (or complexes) and cohomological rank functions. }\label{subs:Q} $\mathbb Q$-twisted coherent sheaves (\cite[Definitions 6.2.1 and 6.2.2]{laz2}) are defined as follows:  one consider pairs $(\F,\udelta)$, where $\F$ is a coherent sheaf on an abelian variety $A$ and $\udelta$ is a class in $\mathrm{NS}(A)_{\mathbb Q}$. Two such pairs are identified under the equivalence relation generated by $(\F\otimes L,\delta)\sim (\F,\l+\udelta)$ (where $L$ is a line bundle and $\l$ its class in $\mathrm{NS}(A)$). The equivalence class of $(\F,\udelta)$ is called a \emph{$\mathbb Q$-twisted sheaf} and denoted $\F\langle\udelta\rangle$. Similarly, given an object $\F\in D(A)$,  the $\mathbb Q$-twisted element $\F\langle\udelta\rangle$ is defined in the same way.

 We consider a class $\udelta=\lambda\l$, where $\lambda=\frac a b\in\mathbb Q$ (with $b>0$) and $\l$ is an integral class.  For an element $\F\in D(A)$ the $i$-th (generic) \emph{(hyper)cohomology rank} of $\F\langle x\l\rangle$ is defined as follows (\cite{ens}):
\begin{equation}\label{gen}
h^i_{gen}(A,\F\langle \lambda\l\rangle):=\frac1 {b^{2g}} h^i(A, (b_A^*\F)\otimes L^{\otimes ab}\otimes P_\alpha)
\end{equation}
 (here $h^i(A, \,\cdot\,)$ means the dimension of the (hyper)cohomology vector space $H^i(A,\,\cdot\,)$), where $L$ is a line bundle representing $\l$ and for $\alpha\in \Pic0A$ is general  so that  $h^i(A, (b_A^*\F)\otimes L^{\otimes ab}\otimes P_\alpha)$ is minimal. It is easily seen that this is well defined, in the sense that it does not depend on the representation $\lambda=\frac a b$ nor on the representing line bundle $L$ (\cite{ens}). We write   
\[
h^i_{A,\F,\l}(\lambda):= h^i_{gen}(A,\F\langle \lambda\l\rangle).
\]
This defines continuous functions $h^i_{A,\F,\l}:\mathbb Q\rightarrow \mathbb Q^{\ge 0}$, (extending to continuous real functions, \cite[Theorem 3.2]{ens}). By definition, for a positive integer $n$ we have that $h^i_{A,\F,n\l}(\lambda) =h^i_{A,\F,\l}(n\lambda)$. 

%%%%

\subsection{Vanishing conditions for $\mathbb Q$-twisted sheaves (or objects)}\label{IT} The rational numbers (\ref{gen}) are well defined, i.e. independent on the representation $\lambda=\frac a b$, only for $\alpha$ general in $\widehat A$, i.e. when $h^i(A, (b_A^*\F)\otimes L^{\otimes ab}\otimes P_\alpha)$ is the minimal value for $\alpha\in\widehat A$. Concerning the \emph{i-th  jump loci},  i.e. the loci  
of $\alpha\in \widehat A$ such that $h^i(A, (b_A^*\F)\otimes L^{\otimes ab}\otimes P_\alpha)$ is not minimal (in particular, the cohomological jump loci of Subsection \ref{subs: not(d)}) they  are not well defined, but their dimension is. 

In particular, it makes sense  to to say that: a \emph{ $\mathbb Q$-twisted  element of $D(A)$, say $\F\langle \frac a b\l\rangle$, is IT(i)}  if $(b_A^*\F)\otimes L^{\otimes ab}$ is so.  Similarly,  $\F\langle \frac a b\l\rangle$ is said to be GV if $(b_A^*\F)\otimes L^{\otimes ab}$ is so.
Concerning this, it is known by \cite[Theorem 5.2]{ens} (extending results of \cite{hacon}, see also \cite{pp2})  that: \\
(a)  to be IT(0) is an open-below condition, i.e. $\F\langle\lambda\l \rangle$ is IT(0) if and only if $\F\langle(\lambda-\varepsilon)\l \rangle$ is $IT(0)$ for sufficiently small $\varepsilon$,\\
% (note: although not stated in \emph{loc. cit.}, we rematk that the same is true for the IT(k) condition, for all $k$),  \\
(b)  a $\mathbb Q$-twisted sheaf  $\F\langle\lambda\l \rangle$ (or, more generally, a $\mathbb Q$-twisted object whose $i$-th cohomological support loci are empty for $i<0$) is $GV$ if and only if  $\F\langle(\lambda+\varepsilon)\l \rangle$ is IT(0) for all $\varepsilon>0$.

%%%%%%%%%%%%

%%%%%%%%%

\section{$\mathbb Q$-twisted objects, semihomogeneous vector bundles and cohomological rank functions}\label{sect:2}

%%%%%%
\subsection{Relation between $\mathbb Q$-twisted objects and semihomogeneous vector bundles. } 
The relation between simple semihomogeneous vector bundles and $\mathbb Q$-twisted sheaves is in the following slight precisation of (c) of Subsection \ref{subs:semihom}, implicit in \cite{semihom}, but not explicitly stated. 

\begin{proposition}\label{basic}  Let $\lambda=\frac a b\in\mathbb Q$, let $E_{A,\lambda\l}$ be a simple bundle in $\mathbb S_{A,\lambda \l}$ and let $r_{A,\l}(\lambda)=\rk E_{A,\lambda\l}$. Then 
\begin{equation}\label{basic1}
b_A^*E_{A,\lambda\l}\cong (L^{\otimes ab})^{\oplus r_{A,\l}(\lambda)},
\end{equation}
where $L$ is a line bundle representing $\l$.
\end{proposition}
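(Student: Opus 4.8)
The plan is to use Mukai's structure theory for simple semihomogeneous bundles, specifically property (c) of Subsection \ref{subs:semihom}: there is an isogeny $\pi\colon B\to A$ and a line bundle $M$ on $B$ with $\pi^*E_{A,\lambda\l}\cong M^{\oplus r}$, $r=r_{A,\l}(\lambda)$, and moreover such isogenies are exactly those factoring through $p\colon \Phi(E_{A,\lambda\l})\to A$. Combined with the explicit description \eqref{Phi}, namely $\Phi(E_{A,\lambda\l})=\mathrm{Im}(b_A,\varphi_{a\l})$, the first projection $p$ is identified with $b_A\colon A\to A$. Hence $b_A$ itself is one of the admissible isogenies, so $b_A^*E_{A,\lambda\l}\cong M^{\oplus r}$ for some line bundle $M$ on $A$. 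The whole task then reduces to identifying the line bundle $M$ as $L^{\otimes ab}$.

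First I would pin down $M$ up to algebraic equivalence by a Néron–Severi (first Chern class) computation. On one hand $c_1(b_A^*E_{A,\lambda\l}) = b_A^*(c_1 E_{A,\lambda\l})$; since $E_{A,\lambda\l}\in\mathbb S_{A,\lambda\l}$ we have $c_1 E_{A,\lambda\l} = \rk(E)\cdot\lambda\l = r\cdot \tfrac a b\l$ in $\mathrm{NS}(A)_\mathbb Q$, and $b_A^*$ acts as multiplication by $b^2$ on $\mathrm{NS}(A)$, so $c_1(b_A^*E_{A,\lambda\l}) = r b^2\cdot\tfrac a b\l = r\cdot ab\,\l$. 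On the other hand $c_1(M^{\oplus r}) = r\,c_1(M)$. Therefore $c_1(M) = ab\,\l$ in $\mathrm{NS}(A)$, i.e. $M$ is algebraically equivalent to $L^{\otimes ab}$. Thus $M\cong L^{\otimes ab}\otimes P_\alpha$ for some $\alpha\in\widehat A$, and it remains to show one can choose the decomposition so that $\alpha = 0$ — equivalently, that $b_A^*E_{A,\lambda\l}$ is in fact isomorphic to $(L^{\otimes ab})^{\oplus r}$ and not merely to $(L^{\otimes ab}\otimes P_\alpha)^{\oplus r}$ for some nonzero $\alpha$.

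To handle this last point I would pull back one step further. The pullback along $b_A$ of $L^{\otimes ab}\otimes P_\alpha$ versus $L^{\otimes ab}$: since $b_A^*P_\alpha \cong \OO_A$ for any $\alpha$ (translation-invariant line bundles pull back trivially under multiplication isogenies — more precisely $n_A^*P_\alpha\cong P_{n\alpha}$... here one must be a bit careful, as $b_A^*P_\alpha = P_{b\alpha}$, not trivial in general). So instead the cleanest route is: the ambiguity $P_\alpha$ is exactly the ambiguity already present in the choice of $E_{A,\lambda\l}$ itself, by property (a) (simple bundles in $\mathbb S_{A,\lambda\l}$ are unique up to tensoring by $P_\beta\in\widehat A$). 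Replacing $E_{A,\lambda\l}$ by $E_{A,\lambda\l}\otimes P_\beta$ replaces $b_A^*E_{A,\lambda\l}$ by $b_A^*E_{A,\lambda\l}\otimes P_{b\beta}$, and since $b\colon\widehat A\to\widehat A$ is surjective we may choose $\beta$ with $b\beta = -\alpha$, normalizing $M$ to $L^{\otimes ab}$. So for a suitable representative $E_{A,\lambda\l}$ we get \eqref{basic1} on the nose; and since the statement of Proposition \ref{basic} is about \emph{a} simple bundle with the stated $c_1$ (any two differing by $P_\beta$), this suffices.

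I expect the main obstacle to be exactly this normalization/bookkeeping of the $\Pic^0$-ambiguity: matching the "which representative of $E_{A,\lambda\l}$" freedom with the "which line bundle $M$" freedom, and confirming that $b_A^*$ on $\widehat A$ (the twist that appears) is surjective so the normalization is achievable. The identification $p = b_A$ coming from \eqref{Phi} also deserves a careful check — one must verify that the first-projection map on $\mathrm{Im}(b_A,\varphi_{a\l})\subset A\times\widehat A$ really is $b_A$ and not some quotient of it, i.e. that $(b_A,\varphi_{a\l})$ is injective enough; but this is immediate since the first coordinate already recovers $b_A$. Everything else (the $c_1$ computation, invoking Mukai (a)–(c) and \eqref{Phi}) is routine.
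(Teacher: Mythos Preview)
Your argument is correct and follows essentially the same route as the paper: invoke Mukai's property (c) (together with \eqref{Phi}) to obtain $b_A^*E_{A,\lambda\l}\cong M^{\oplus r}$ for some line bundle $M$, then pin down $[M]=ab\,\l$ by a first-Chern-class/determinant computation. The only difference is in handling the $\Pic^0$ ambiguity: you normalize by replacing $E_{A,\lambda\l}$ with a twist $E_{A,\lambda\l}\otimes P_\beta$, whereas the paper---more in line with the quantifiers in the statement, where $E_{A,\lambda\l}$ is given and $L$ is to be found---simply absorbs the ambiguity into the choice of the representative $L$ of $\l$ (any line bundle with N\'eron--Severi class $ab\,\l$ is an $ab$-th power of some $L$ with class $\l$, since $\widehat A$ is divisible). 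Your fix is equivalent and the argument goes through.
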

\begin{proof} It follows from (c) of Subsection \ref{subs:semihom} that
$b_A^*E_{A,\lambda\l}\cong M^{\oplus r_{A,\l}(\lambda)}$ for some line bundle $M$ on $A$. 
It follows also that $M$ has to be proportional to $\det(E)$ and hence that the class of $M$ has to be a multiple of $\l$. Hence 
\[
b_A^*E_{A,\lambda\l}\cong (L^{\otimes k})^{\oplus r_{A,\l}(\lambda)},
\]
for some integer $k$,  where $L$ is a line bundle representing the class $\l$. We have that $[\det E_{A,\lambda\l}]= \frac{r_{A,\l}(\lambda)a} b \l$ and  $[b_A^*\det E_{A,\lambda\l}]=b^2[\det E_{A,\lambda\l}]$. From this it follows that $k=ab$. 
\end{proof}

As announced, it turns out that the $\mathbb Q$-twisted sheaf $\OO_A^{\oplus r_{A,\l}(\lambda)}\langle\lambda\l\rangle$ behaves cohomologically as the bundle   $E_{A,\lambda\l}$, in the following sense

\begin{proposition}\label{fractional} We keep the notation  of Proposition \ref{basic} and of Subsection \ref{subs:Q}. For all object $\F$ in $D(A)$ and for all $\lambda,t\in\mathbb Q$ and for all simple bundle $E_{A,\lambda\l}\in\mathbb S_{A,\lambda l}$
\[ h^i_{A,\F\otimes E_{A,\lambda\l}\,,\l}(t)=r_{A,\l}(\lambda)\, h^i_{A,\F,\l}(\lambda+t).
\]
Moreover a $\mathbb Q$-twisted object $\F\langle\lambda\l\rangle$ is IT(i),  or GV, \emph{(see Subsection \ref{IT})} if and only if the object $\F\otimes\E_{A,\lambda\l}$ is so. 
\end{proposition}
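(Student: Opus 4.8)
The plan is to reduce the entire statement to Proposition \ref{basic} by pulling back along a single multiplication isogeny. I would first choose a common denominator $m>0$ for the two rationals involved, writing $\lambda=\frac{a}{m}$ and $t=\frac{c}{m}$; then Proposition \ref{basic} gives $m_A^*E_{A,\lambda\l}\cong(L^{\otimes am})^{\oplus r}$ with $r=r_{A,\l}(\lambda)$, and everything else is formal manipulation of the definition (\ref{gen}).

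For the equality of cohomological ranks, I would unwind (\ref{gen}) on the left-hand side. Since $E_{A,\lambda\l}$ is locally free we have $m_A^*(\F\otimes E_{A,\lambda\l})\cong m_A^*\F\otimes m_A^*E_{A,\lambda\l}$, and tensoring the splitting of Proposition \ref{basic} by $m_A^*\F\otimes L^{\otimes cm}\otimes P_\alpha$ yields, for \emph{every} $\alpha\in\widehat A$,
\[
m_A^*(\F\otimes E_{A,\lambda\l})\otimes L^{\otimes cm}\otimes P_\alpha\ \cong\ \bigl(m_A^*\F\otimes L^{\otimes(a+c)m}\otimes P_\alpha\bigr)^{\oplus r}.
\]
Taking $h^i$ and dividing by $m^{2g}$, the left-hand side of the claimed identity becomes $\frac{r}{m^{2g}}h^i(A,m_A^*\F\otimes L^{\otimes(a+c)m}\otimes P_\alpha)$, which is exactly $r\cdot h^i_{A,\F,\l}(\lambda+t)$ by (\ref{gen}) applied to the representation $\lambda+t=\frac{a+c}{m}$. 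The one point requiring care is that (\ref{gen}) uses a \emph{general} $\alpha$: on the left $\alpha$ minimizes $h^i(A,m_A^*(\F\otimes E_{A,\lambda\l})\otimes L^{\otimes cm}\otimes P_\alpha)$, on the right it minimizes $h^i(A,m_A^*\F\otimes L^{\otimes(a+c)m}\otimes P_\alpha)$; but the displayed isomorphism shows the former quantity equals $r$ times the latter for \emph{all} $\alpha$, so the two minimizing loci coincide and the generic values differ by exactly the factor $r$. This proves the first assertion.

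For the IT$(i)$ statement I would work with cohomological support loci. Writing $\lambda=\frac{a}{b}$, by Remark \ref{IT} the object $\F\langle\lambda\l\rangle$ is IT$(i)$ precisely when $b_A^*\F\otimes L^{\otimes ab}$ is IT$(i)$, i.e. when $V^j(A,b_A^*\F\otimes L^{\otimes ab})=\emptyset$ for all $j\ne i$. Proposition \ref{basic} gives $b_A^*(\F\otimes E_{A,\lambda\l})\cong(b_A^*\F\otimes L^{\otimes ab})^{\oplus r}$, so these two objects have the same loci $V^j$ (multiplicity being irrelevant to (non)emptiness). It then remains to observe that for the separable isogeny $b_A$ one has $V^j(A,b_A^*\G)=\emptyset$ if and only if $V^j(A,\G)=\emptyset$: indeed $b_A^*\colon\widehat A\to\widehat A$ is surjective, so every $P_\beta$ is of the form $b_A^*P_\gamma$, and the projection formula together with $b_{A*}\OO_A\cong\bigoplus_{\eta\in\widehat A[b]}P_\eta$ gives $H^j(A,b_A^*\G\otimes P_\beta)\cong\bigoplus_{\eta\in\widehat A[b]}H^j(A,\G\otimes P_{\gamma+\eta})$, whence $V^j(A,b_A^*\G)$ is empty exactly when no translate $\gamma+\widehat A[b]$ meets $V^j(A,\G)$, i.e. exactly when $V^j(A,\G)$ is empty. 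Chaining these equivalences with $\G=\F\otimes E_{A,\lambda\l}$ shows $\F\langle\lambda\l\rangle$ is IT$(i)$ if and only if $\F\otimes E_{A,\lambda\l}$ is.

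Once Proposition \ref{basic} is available the computation is essentially forced, so I do not expect a serious obstacle; the two places that need genuine attention are the ``general $\alpha$'' bookkeeping in the first part (handled above via the pointwise identity of cohomology dimensions) and, in the second part, the reduction of the IT$(i)$ condition across the isogeny $b_A$. I would budget the most words for this last reduction, even though it is a standard fact about multiplication-by-$b$ maps on abelian varieties.
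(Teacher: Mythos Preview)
Your argument is correct and takes essentially the same approach as the paper: reduce to Proposition \ref{basic} by pulling back along a multiplication isogeny. The paper keeps separate denominators $\lambda=a/b$, $t=c/d$ and packages the first pullback as an appeal to Lemma \ref{isogenies} before unwinding (\ref{gen}) with the second denominator, whereas your common-denominator choice lets you unwind (\ref{gen}) in a single step; the paper also omits the proof of the IT$(i)$ equivalence entirely, so your detailed treatment of that part is an addition rather than a variant.
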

\begin{proof} Let $\lambda=\frac a b$ and $t=\frac c d$. Combining Proposition  \ref{basic}  and Lemma  \ref{isogenies} below we have that 
\[
h^i_{A,\F\otimes E_{A,\lambda\l}\,,\l}(t)=\frac  {r_{A,l}(\lambda)} {b^{2g}}\, h^i_{A, b_A^*\F\otimes L^{\otimes ab}, b^2\l}(t).
\]
 The right hand side is equal to
\[\frac  {r_{A,l}(\lambda)} {(db)^{2g}}\, h^i_{gen}(A, (db)_A^*\F\otimes L^{\otimes (d^2ab+b^2cd)})=r_{A,\l}(\lambda)\, h^i_{A,\F,\l}(\lambda+t).
\]
\end{proof}
In the following Lemma we record the fact that, essentially by definition, the cohomology rank functions are multiplicative with respect to isogenies:
\begin{lemma} \label{isogenies} Let $\psi:B\rightarrow A$ an isogeny. Then, in the above setting and notation,
\[
h^i_{B,\psi^*\F,\psi^*\l}=(\deg\psi)\, h^i_{A,\F,\l}
\]
\end{lemma}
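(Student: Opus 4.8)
The plan is to unwind the definition \eqref{gen} on both sides and reduce the whole statement to the projection formula for $\psi$. Fix a value $\lambda=\frac ab\in\mathbb Q$ (with $b>0$). Since $\psi$ is a homomorphism we have $\psi\circ b_B=b_A\circ\psi$, and $\psi^*\l$ is represented by the line bundle $\psi^*L$; hence
\[
b_B^*(\psi^*\F)\otimes(\psi^*L)^{\otimes ab}\cong\psi^*\bigl(b_A^*\F\otimes L^{\otimes ab}\bigr)=:\psi^*\G .
\]
Thus, at the value $\lambda$, the lemma amounts to the equality $\min_{\beta\in\widehat B}h^i(B,\psi^*\G\otimes P_\beta)=(\deg\psi)\cdot\min_{\alpha\in\widehat A}h^i(A,\G\otimes P_\alpha)$.

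The computational core is the standard description of pushforward along an isogeny. In characteristic zero the kernel $K:=\ker\psi$ is \'etale and $\psi$ is a $K$-torsor, so $\psi_*\OO_B\cong\bigoplus_{\gamma\in\widehat K}P_\gamma$, where $\widehat K\subset\widehat A$ is the Cartier dual of $K$, equivalently the kernel of the dual isogeny $\widehat\psi\colon\widehat A\to\widehat B$; it has order $\deg\psi$. Since $\widehat\psi$ is surjective, every $P_\beta$ on $B$ is of the form $\psi^*P_\alpha$ with $\widehat\psi(\alpha)=\beta$, and then $\widehat\psi^{-1}(\beta)=\alpha+\widehat K$, a set of $\deg\psi$ points since $\widehat\psi$ is \'etale. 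Combining the projection formula $R\psi_*\bigl(\psi^*(\G\otimes P_\alpha)\bigr)\cong(\G\otimes P_\alpha)\otimes\psi_*\OO_B$ (valid since $\psi$ is finite, so there are no higher direct images) with the decomposition of $\psi_*\OO_B$ gives
\[
h^i(B,\psi^*\G\otimes P_\beta)=\sum_{\alpha'\in\widehat\psi^{-1}(\beta)}h^i(A,\G\otimes P_{\alpha'}).
\]

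To finish, set $m:=\min_{\alpha\in\widehat A}h^i(A,\G\otimes P_\alpha)$, so that $h^i_{A,\F,\l}(\lambda)=m/b^{2g}$. Every summand in the displayed identity is $\ge m$, so $h^i(B,\psi^*\G\otimes P_\beta)\ge(\deg\psi)\,m$ for all $\beta$. Conversely, by upper semicontinuity the jump locus $W=\{\alpha\in\widehat A:h^i(A,\G\otimes P_\alpha)>m\}$ is a proper closed subset; since $\widehat\psi$ is finite, $\widehat\psi(W)$ is a proper closed subset of $\widehat B$, and for $\beta\notin\widehat\psi(W)$ the entire fibre $\widehat\psi^{-1}(\beta)$ avoids $W$, forcing the sum to equal $(\deg\psi)\,m$. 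Hence $\min_\beta h^i(B,\psi^*\G\otimes P_\beta)=(\deg\psi)\,m$, and dividing by $b^{2g}$ gives $h^i_{B,\psi^*\F,\psi^*\l}(\lambda)=(\deg\psi)\,h^i_{A,\F,\l}(\lambda)$; as $\lambda$ was arbitrary, this is the claimed identity of functions on $\mathbb Q$. The only genuine point to watch — and essentially the whole content beyond formalities — is matching the generic locus on $\widehat B$ with that on $\widehat A$, which is exactly where the finiteness of $\widehat\psi$ enters; the rest is the projection formula plus bookkeeping.
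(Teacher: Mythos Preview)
Your proof is correct and follows essentially the same route as the paper's: reduce via $\psi\circ b_B=b_A\circ\psi$ to $\psi^*\G$, apply the projection formula together with $\psi_*\OO_B\cong\bigoplus_{\gamma\in\ker\hat\psi}P_\gamma$, and sum. If anything you are more careful than the paper, which simply writes ``for $\alpha$ sufficiently general'' where you spell out the semicontinuity/finiteness argument matching the generic loci on $\widehat B$ and $\widehat A$.
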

\begin{proof} As usual, let $\lambda=\frac{a}{b}$. Since $b_B\psi=\psi b_A$ we have that
\begin{equation}\label{isogeny1}
b^{2g}h^i_{B,\psi^*\F,\psi^*\l}(\lambda)=h^i_{gen}(B, \psi^*(b_A^*\F\otimes L^{\otimes ab}))=h^i_{gen}(A,(\psi_*\OO_B)\otimes b_A^*\F\otimes L^{\otimes ab})
\end{equation}
where  the last inequality follows from  projection formula. Denoting $\hat\psi:\widehat A\rightarrow \widehat B$ the dual isogeny, the last member of (\ref{isogeny1}) is equal to
\[
\bigoplus_{\alpha^\prime\in\ker\hat\psi}h^i(A,b_A^*\F\otimes L^{\otimes ab}\otimes P_{\alpha+\alpha^\prime})=\deg \psi\, h^i_{gen}(A,b_A^*\F\otimes L^{\otimes ab})=b^{2g}\deg\psi\, h^i_{A,\F,\l}(\lambda)
\]
where in the first member $\alpha\in\Pic0 A$  is sufficiently general.
\end{proof}

%\begin{remark} It can be shown that Corollary \ref{fractional} holds also for: (a) any vector bundle $E$ on $A$ such that there exists an isogeny $\phi:B\rightarrow A$ such that $\phi^*E\cong M^{\oplus r}$, where $M$ is an ample line bundle on $B$, and (b) for any vector bundle such that there exists an isogeny $\psi: C\rightarrow A$ such that $E=\psi_*M$, where $M$ is an ample line bundle on $C$. Both conditions imply that $E$ is semihomogeneous (and ample). Conversely, both conditions are satisfied by ample simple semihomogeneous bundles (\cite[Theorem 5.8 and Proposition 7.3]{semihom}).\end{remark}

\subsection{Semihomogeneous vector bundles and the Fourier-Mukai transform. } In this subsection we describe the behavior of ample semihomogeneous vector bundles  (and their duals) with respect to the Fourier-Mukai transform. The following proposition is standard (compare e.g. with \cite[\S1.4]{oprea})

\begin{proposition}\label{FMP-semihom} Let $\l$ be a polarization on an abelian variety $A$ of dimension $g$, of type $(d_1,\dots ,d_g)$, and let $E\in \mathbb S_{A, \lambda\l}$. If $\lambda\in\mathbb Q^+$ (respectively $\lambda\in\mathbb Q^-$) then $E$ is IT(0) (resp. IT(g)) and $\widehat E\in \mathbb S_{\widehat A,-\frac 1 {d_1d_g\lambda}\hat\l}$ \emph{(see (\ref{hat}) for the notation)}. 
\end{proposition}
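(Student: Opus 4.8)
The plan is to reduce everything to the two basic facts already recorded: the structure of semihomogeneous bundles after pullback by multiplication (Proposition \ref{basic}), and the behaviour of Fourier--Mukai under isogenies, together with the classical index theorem for (powers of) a polarization. Write $\lambda=\frac a b$ with $b>0$ and $a>0$ (the case $\lambda\in\mathbb Q^-$ is symmetric, interchanging the roles of $0$ and $g$, or can be obtained by dualizing). By Proposition \ref{basic} we have $b_A^*E\cong (L^{\otimes ab})^{\oplus r}$ where $L$ represents $\l$ and $r=\rk E=r_{A,\l}(\lambda)$; note $ab>0$ so $L^{\otimes ab}$ is ample. The classical index theorem (the integral case of the vanishing conditions in Subsection \ref{subs: not(d)}) gives that $L^{\otimes ab}$ is IT(0), hence $b_A^*E$ is IT(0). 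Since $b_A$ is an isogeny and $V^j(B,\psi^*\F)=\psi^{-1}\text{-related cohomology}$, the IT(0) property descends: $V^j(A,E\otimes P_\alpha)$ pulls back into $V^j(A, b_A^*E\otimes P_{b\alpha'})$ which is empty for $j\neq 0$, so $V^j(A,E)=\emptyset$ for $j\neq0$, i.e. $E$ is IT(0). (Concretely: $h^j_{A,\OO_A^{\oplus r}, \l}(\lambda)=0$ for $j\neq 0$ by Proposition \ref{basic}, and by Proposition \ref{fractional} this equals $\frac1r h^j_{A,E,\l}(0)$ up to the cohomology-rank normalization, forcing the vanishing of $V^j(A,E)$ since a coherent sheaf has no jump locus with empty generic value only if the locus is empty.) One must be slightly careful here and argue the IT(0) statement directly with line bundles rather than through generic cohomology ranks, since ranks only see the generic point; the clean route is: $b_A^*E$ IT(0) $\Rightarrow$ for every $\alpha\in\widehat A$, $H^j(A,E\otimes P_\alpha)$ injects (after pullback and using that $b_A$ is finite flat, $H^j(A,\mathcal G)\hookrightarrow H^j(A,b_A^*\mathcal G)$) into $H^j(A,b_A^*E\otimes b_A^*P_\alpha)=H^j(A,(L^{\otimes ab})^{\oplus r}\otimes P_{\alpha'})=0$ for $j\neq0$.

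Next I would identify $\widehat E=R^0\Phi_{\cP}(E)$ as a semihomogeneous bundle and compute its slope. That it is semihomogeneous: $E$ semihomogeneous means $t_x^*E\cong E\otimes P_{\xi(x)}$ for a suitable $\xi(x)\in\widehat A$; applying $\Phi_{\cP}$ and using the exchange formula \eqref{translation} (namely $\Phi_\cP(t_x^*\F)=\Phi_\cP(\F)\otimes P_x$ and $\Phi_\cP(\F\otimes P_\alpha)=t_\alpha^*\Phi_\cP(\F)$) gives $\widehat E\otimes P_x\cong t^*_{\xi(x)}\widehat E$, i.e. $\widehat E$ is semihomogeneous on $\widehat A$ as well. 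To pin down the slope, compute $[\det\widehat E]/\rk\widehat E$ in $\mathrm{NS}(\widehat A)_{\mathbb Q}$. Here $\rk\widehat E=\chi(E)$ (since $E$ is IT(0)) and $\chi(E)$ is given by the formula in \eqref{rk/dim}. For $[\det\widehat E]$ the cleanest input is again Proposition \ref{basic} combined with the known Fourier--Mukai transform of $L^{\otimes ab}$: since $L^{\otimes ab}$ is IT(0) with $\widehat{L^{\otimes ab}}\in\mathbb S_{\widehat A, -\frac1{ab}\widehat{(ab\l)}\cdot(\ldots)}$ — more precisely $\varphi_{ab\l}^*\widehat{L^{\otimes ab}}$ is a sum of copies of the dual bundle, with N\'eron--Severi class computable from $\varphi_{ab\l}^*\hat\l = ab\,d_1 d_g\,(ab\l)/\ldots$ via \eqref{dual3}. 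I would run this computation through the isogeny $(b_A,\varphi_{a\l})$ whose image is $\Phi(E)$ by \eqref{Phi}, so that the pullback of $\widehat E$ under the corresponding isogeny $\widehat A\to\widehat A$ trivializes into a sum of line bundles whose class is then read off and divided back down.

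The main obstacle I anticipate is precisely this slope computation: getting the rational coefficient $-\frac1{d_1 d_g\lambda}$ correct, including the dependence on the type $(d_1,\dots,d_g)$, requires carefully tracking how $\varphi_{a\l}$, the multiplication $b_A$, and the dual polarization identities \eqref{dual1}--\eqref{dual3} interact, and in particular correctly matching $\chi(E)$, $\rk E$, and $u_{A,\l}(a,b)$ against $\chi(\widehat E)=(-1)^? \rk E$ and the analogous invariants on $\widehat A$. A consistency check I would insist on: the formula must be an involution, i.e. applying the statement twice (passing from $(A,\l)$ to $(\widehat A,\hat\l)$ and back, using $\widehat{\widehat A}=A$, $\widehat{\hat\l}$ proportional to $\l$, and $\Psi_\cP\Phi_\cP=(-1_A)^*[-g]$) should return $E$ with its original slope $\lambda\l$; checking $-\frac1{d_1 d_g\cdot(-\frac1{d_1d_g\lambda})}\cdot(\widehat{\hat\l}\text{-normalization})=\lambda\l$ is a good numerical sanity test and, once it works, essentially forces the coefficient. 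The remaining verification that $\widehat E$ is again \emph{simple} when $E$ is simple follows from the Fourier--Mukai transform being an equivalence (so $\Hom(\widehat E,\widehat E)=\Hom(E,E)=k$), together with IT(0) ensuring $\widehat E$ is concentrated in degree $0$.
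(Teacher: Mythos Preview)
Your argument for IT(0) and for the semihomogeneity of $\widehat E$ is correct and matches the paper's (the paper phrases the latter via Mukai's group $\Phi^0(E)$ and \cite[Proposition 5.1]{semihom}, but this is equivalent to your use of the translation/tensor exchange under $\Phi_\cP$). One small omission: the statement is for arbitrary $E\in\mathbb S_{A,\lambda\l}$, not only simple ones; the paper first reduces to the simple case via Subsection~\ref{subs:semihom}(d), and you should do the same since Proposition~\ref{basic} is stated for simple bundles.

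Where you diverge is in the slope computation, and here you are working harder than necessary. You already observe $\rk\widehat E=\chi(E)$; the paper simply pairs this with $\chi(\widehat E)=(-1)^g\rk E$ (Fourier--Mukai exchanges rank and Euler characteristic) and then inverts the formula from Subsection~\ref{subs:semihom}(b), which gives $\chi(F)/\rk F=\mu^g\chi(\hat\l)$ for $F\in\mathbb S_{\widehat A,\mu\hat\l}$. Concretely,
\[
\frac{\chi(\widehat E)}{\rk\widehat E}=\frac{(-1)^g\rk E}{\chi(E)}=\Bigl(-\frac1\lambda\Bigr)^g\frac1{\chi(\l)}\overset{(\ref{dual1})}{=}\Bigl(-\frac1{d_1d_g\lambda}\Bigr)^g\chi(\hat\l),
\]
and one reads off the coefficient $-\tfrac1{d_1d_g\lambda}$ at once. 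This bypasses entirely the isogeny-pullback computation you flag as the main obstacle: there is no need to trace $\varphi_{a\l}^*$, $b_A^*$, or the transform of $L^{\otimes ab}$ through (\ref{dual2})--(\ref{dual3}), nor to compute $[\det\widehat E]$ separately. Your involution sanity check then becomes automatic, since it reduces to $\chi(\l)\chi(\hat\l)=(d_1d_g)^g$, which is precisely (\ref{dual1}).
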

\begin{proof} We know that $E$ is a direct sum of $E_{A\lambda\l}\otimes P_{\alpha_i}$-potent vector bundles,  where $E_{A\lambda\l}$ is as usual a reference simple vector bundle in $\mathbb S_{A, \lambda\l}$ and $\alpha_i\in\Pic0 A$, (Subsection \ref{subs:semihom}(d)). Therefore
we can assume that $E$ is simple. Then the assertion about the vanishing conditions is obvious, for example by Proposition \ref{basic} and projection formula. Concerning the Fourier-Mukai transform, in the first place it is clear that $\widehat E$ is semihomogeneous (and simple). Indeed,
denoting, as in \cite[Definition 3.1]{semihom} $\Phi^0(E)=\{(x,\alpha)\in A\times \widehat A\>|\>t_x^*E\cong E\otimes P_\alpha\}$, it is clear from (\ref{translation}) that the algebraic groups $\Phi^0(E)$ and  $\Phi^0(\widehat E)$ are isomorphic. Hence the claim follows from \cite[Proposition 5.1]{semihom} asserting that a vector bundle on an abelian variety $A$, say $F$, is semihomogeneous if and only if $\dim \Phi^0(F)=\dim A$. 
The rest of the statement is proved  as follows. In the first place, by duality one can assume that $\lambda\in\mathbb Q^+$. If this is the case the Fourier-Mukai  transform exchanges $\chi$ with the rank, i.e. the rank of $\widehat E$ is $h^0(E)=\chi(E)$, while $h^g(\widehat E)=(-1)^g\chi(\widehat F)$, is the rank of $E$. By Subsection \ref{subs:semihom}(b), 
\[\frac {\chi(E)} {\rk(E)}=(\lambda)^g\chi(\l).
\]
 Therefore 
\[\frac {\chi(\widehat E)} {\rk(\widehat E)}=(-\frac 1 \lambda )^g\chi(\l)=(-\frac 1 {\lambda d_1d_g})^g\chi(\hat\l).
\]
where the last equality follows from (\ref{dual1}). Therefore, again from  Subsection \ref{subs:semihom}(b), $\widehat E\in\mathbb S_{\widehat A,-\frac 1 {d_1d_g\lambda}\hat\l}$. 
\end{proof}

As a consequence we have a different proof of the fundamental formula for the cohomological rank functions of the FMP transform \cite[Proposition 2.3]{ens}.
\begin{proposition} \label{FMP} 
Let $A$ an abelian variety and let $\F \in D(A$). For $\lambda\in\mathbb Q^{-}$ 
\begin{equation}\label{FMP1}h^i_{A,\F,\l}(\lambda)=\chi(\l)(-\lambda)^g h^i_{\widehat A, \Phi_\cP(\F),\hat\l}\,(-\frac{1}{d_1d_g\lambda}).
\end{equation}
For $\lambda\in\mathbb Q^{+}$ 
\begin{equation}\label{FMP2}h^i_{A,\F,\l}(\lambda)=\chi(\l)\lambda^g h^{g-i}_{\widehat A, \Phi_{\cP^\vee}(\F^\vee),\hat\l}\,(\frac{1}{d_1d_g\lambda}).
\end{equation}
\end{proposition}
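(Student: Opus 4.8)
The plan is to derive the two formulas \eqref{FMP1} and \eqref{FMP2} from Proposition \ref{FMP-semihom} together with Proposition \ref{fractional}, which lets one trade the $\mathbb Q$-twist $\langle\lambda\l\rangle$ for tensoring by a simple semihomogeneous bundle. First I would fix $\lambda=\frac{a}{b}\in\mathbb Q^-$, write $r=r_{A,\l}(\lambda)$, and pick a simple $E_{A,\lambda\l}\in\mathbb S_{A,\lambda\l}$; by Proposition \ref{fractional} one has $r\,h^i_{A,\F,\l}(\lambda)=h^i_{A,\F\otimes E_{A,\lambda\l},\l}(0)=h^i_{gen}(A,\F\underline\otimes E_{A,\lambda\l})$. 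The idea is then to compute the right-hand side via the exchange formula \eqref{exchange}. Since $\lambda<0$, Proposition \ref{FMP-semihom} tells us $E_{A,\lambda\l}$ is IT($g$) with $\widehat{E_{A,\lambda\l}}=R^g\Phi_\cP(E_{A,\lambda\l})$ locally free and lying in $\mathbb S_{\widehat A,-\frac{1}{d_1d_g\lambda}\hat\l}$; note $-\frac{1}{d_1d_g\lambda}>0$. Writing $\mu=-\frac{1}{d_1d_g\lambda}$ and $s=r_{\widehat A,\hat\l}(\mu)=\rk\widehat{E_{A,\lambda\l}}$, we have $\Psi_\cP(\widehat{E_{A,\lambda\l}}[-g])\cong E_{A,\lambda\l}$ up to the standard sign/shift conventions of the Fourier–Mukai inversion, so \eqref{exchange} gives $h^i(A,\F\underline\otimes E_{A,\lambda\l})\cong h^{i+g}(\widehat A,\Phi_\cP(\F)\underline\otimes \widehat{E_{A,\lambda\l}})$ for general twist, and the latter equals $s\,h^{i+g}_{\widehat A,\Phi_\cP(\F),\hat\l}(\mu)$ by Proposition \ref{fractional} again (applied on $\widehat A$, with the twist absorbed into $\widehat{E_{A,\lambda\l}}$).

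It remains to pin down the numerical constant. Combining the two applications of Proposition \ref{fractional} yields $h^i_{A,\F,\l}(\lambda)=\frac{s}{r}\,h^{i+g}_{\widehat A,\Phi_\cP(\F),\hat\l}(\mu)$; a shift of indices shows the cohomological degree that appears is $i$ on the left and $i$ on the right once one accounts carefully for the $[-g]$ in $\Phi_\cP$ of an IT($g$) object versus the index on $\widehat A$. To evaluate $\frac{s}{r}=\frac{\rk\widehat{E_{A,\lambda\l}}}{\rk E_{A,\lambda\l}}$ I would use that the Fourier–Mukai transform of an IT($g$) sheaf exchanges $(-1)^g\chi$ with rank, so $\rk\widehat{E_{A,\lambda\l}}=(-1)^g\chi(E_{A,\lambda\l})$, and then invoke the formula $\frac{\chi(E_{A,\lambda\l})}{\rk E_{A,\lambda\l}}=\lambda^g\chi(\l)$ from Subsection \ref{subs:semihom}(b); hence $\frac{s}{r}=(-\lambda)^g\chi(\l)$, which is exactly the constant in \eqref{FMP1}.

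For \eqref{FMP2} I would reduce to \eqref{FMP1} by duality: for $\lambda\in\mathbb Q^+$ one has $h^i_{A,\F,\l}(\lambda)=h^{g-i}_{A,\F^\vee,\l}(-\lambda)$ (Serre-type duality for cohomological rank functions, which follows from $R\mathcal Hom$ and relative duality on the abelian variety), then apply \eqref{FMP1} to $\F^\vee$ at the negative value $-\lambda$, and finally rewrite $\Phi_\cP(\F^\vee)$ in terms of $\Phi_{\cP^\vee}(\F^\vee)$ using that $\Phi_{\cP^\vee}=(-1_{\widehat A})^*\circ\Phi_\cP$ up to shift, which does not affect generic cohomological ranks. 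The constant transforms as $\chi(\l)(-(-\lambda))^g=\chi(\l)\lambda^g$, giving \eqref{FMP2}.

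The main obstacle I anticipate is purely bookkeeping: getting the cohomological indices and the shifts in the Fourier–Mukai inversion $\Psi_\cP\Phi_\cP=(-1_A)^*[-g]$ to line up correctly so that index $i$ really does map to index $i$ in \eqref{FMP1} and to $g-i$ in \eqref{FMP2}, and making sure the ``general $\alpha$'' in the definition of $h^i_{gen}$ is respected on both sides (this is where Remark \ref{IT} and the fact that tensoring by $P_\alpha$ commutes with everything in sight get used). The geometric content — replacing the twist by a semihomogeneous bundle and transporting it through $\Phi_\cP$ via \eqref{exchange} and Proposition \ref{FMP-semihom} — is straightforward; the care is all in the constants and degrees.
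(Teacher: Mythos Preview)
Your strategy is correct and coincides with the paper's: both proofs rest on Proposition~\ref{fractional} (replace the $\mathbb Q$-twist by tensoring with a simple semihomogeneous bundle), Proposition~\ref{FMP-semihom} (identify the slope of its Fourier--Mukai transform), and the exchange formula~\eqref{exchange}; the constant $\chi(\l)(-\lambda)^g$ then drops out from the rank/Euler-characteristic formulas of Subsection~\ref{subs:semihom}(b), exactly as you compute.

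The one genuine difference is the direction in which the semihomogeneous bundle is chosen. You pick $E_{A,\lambda\l}$ on $A$ (with $\lambda<0$, hence IT($g$)) and then transport it to $\widehat A$; this forces you to pass through the inversion $\Psi_\cP\Phi_\cP=(-1_A)^*[-g]$, which is the source of the index confusion you flag (the spurious $i+g$ that you then correct back to $i$). The paper instead picks the simple bundle on the \emph{dual} side, $E\in\mathbb S_{\widehat A,\,-\frac{1}{d_1d_g\lambda}\hat\l}$, which has \emph{positive} slope and is therefore IT($0$); then $\Psi_\cP(E)=\widehat E$ sits in degree~$0$, the exchange formula applies with no shift at all, and one reads off $r_{\widehat A,\hat\l}(-\tfrac{1}{d_1d_g\lambda})\,h^i_{A,\F,\l}(\lambda)=r_{A,\l}(\lambda)\,h^i_{\widehat A,\Phi_\cP(\F),\hat\l}(-\tfrac{1}{d_1d_g\lambda})$ directly. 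For \eqref{FMP2} the paper proceeds in parallel (again starting from an IT($0$) bundle on $\widehat A$ and inserting one Serre-duality step on $A$), whereas you reduce to \eqref{FMP1} via $h^i_{A,\F,\l}(\lambda)=h^{g-i}_{A,\F^\vee,\l}(-\lambda)$; both routes are valid and yield the same constant. In short: same argument, but the paper's choice of starting on $\widehat A$ with an IT($0$) bundle eliminates precisely the bookkeeping you identify as the main obstacle.
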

\begin{proof} Let $\lambda\in\mathbb Q^{-}$ and let $E:=E_{\widehat A,-\frac 1 {d_1d_g\lambda}\hat\l }\in\mathbb S_{\widehat A, -\frac 1 {d_1d_g\lambda}\hat\l }$ be a simple vector bundle (on $\widehat A$).  By the previous proposition $E$ is IT(0), i.e. $\Psi_{\cP}(E)=\widehat{E}[0]$. By (\ref{translation}) and (\ref{exchange}) we have
\begin{equation}\label{PP1}
H^i(A, \F\otimes \widehat{E}\otimes P_\alpha )=H^i(\widehat A, \Phi_{\cP}(\F)\otimes t_\alpha^*E)
\end{equation}
By Proposition \ref{FMP-semihom} $\widehat E\otimes P_\alpha\in \mathbb S_{A,\lambda\l}$. Hence, applying Proposition \ref{fractional} (for $t=0$) to both sides of the previous equality, we have that
\[
r_{\widehat A,\hat\l}(-\frac 1 {d_1d_g\lambda})\, h^i_{A,\F,\l}(\lambda)=r_{A,\l}(\lambda) h^i_{\widehat A,\Phi_{\cP}(\F),\hat\l}(-\frac 1 {d_1d_g\lambda})
\]
Then (\ref{FMP1}) follows from the formula for the ranks in Subsection \ref{subs:semihom}(b), or, more simply, because $r_{\widehat A,\hat\l}(-\frac 1 {d_1d_g\lambda})=(-1)^g\chi(E_{A,\lambda\l})$ and, again, applying Subsection \ref{subs:semihom}(b). 

The proof for   $\lambda\in\mathbb Q^+$ is similar. Let  $E\in\mathbb S_{\widehat A, \frac 1 {d_1d_g\lambda}\hat\l }$ be a simple vector bundle.  In the same way, by (\ref{translation}) and (\ref{exchange})  we have
\begin{equation}\label{PP2}H^i(A, \F\otimes \widehat E\otimes P_{-\alpha} )=H^i(\widehat A, \Phi_{\cP}(\F)\otimes t_{-\alpha}^*E)
\end{equation}
By duality (see (\ref{dualization}) for the notation) we have that 
\begin{equation}\label{eq:PP3}
H^i(A, \F\otimes \widehat E\otimes P_{-\alpha} )\cong H^{g-i}(A, \F^\vee\otimes (\widehat E)^\vee\otimes P_\alpha)^\vee
\end{equation}
The second equality  follows from Proposition \ref{fractional} as above, because  $(\widehat E)^\vee\otimes P_\alpha$ is a simple bundle in $ \mathbb S_{A, \lambda\l}$. 
\end{proof}

Note that the equalities (\ref{FMP1}) and (\ref{FMP2}) are equivalent to those of \cite[Proposition 2.3]{ens}. Namely,  for $\lambda\in\mathbb Q^-$:
\begin{equation}\label{transf1} 
h^i_{A,\F,\l}(\lambda)=\frac {(-\lambda)^g}{\chi(\l)}h^i_{A, \varphi_\l^*\Phi_{\cP}(\F),\l}\,(-\frac 1 \lambda )
\end{equation}
and for $\lambda\in\mathbb Q^{+}$
\begin{equation}\label{transf2} 
h^i_{A,\F,\l}(\lambda)=\frac {\lambda^g}{\chi(\l)}h^{g-i}_{A, \varphi_\l^*\Phi_{\cP^\vee}(\F^\vee),\l}\,(\frac 1 \lambda ).
\end{equation}
Indeed (\ref{FMP1}) is equivalent to (\ref{transf1}) because
\[
h^{j}_{A, \varphi_\l^*\Phi_{\cP^\vee}(\F^\vee),\l}\,(-\frac 1 \lambda )=h^{j}_{A, \varphi_\l^*\Phi_{\cP^\vee}(\F^\vee),\varphi_\l^*\hat\l}\,(-\frac 1 {d_1d_g\lambda} )=\chi(\l)^2h^{j}_{\widehat A, \Phi_{\cP^\vee}(\F^\vee),\hat\l}\,(-\frac 1 {d_1d_g\lambda} )
\]
where the first equality follows from (\ref{dual3}) and the second one follows from Proposition \ref{isogenies}, recalling that the degree of the isogeny $\varphi_\l$ is $\chi(\l)^2$. The equivalence of the equalities (\ref{FMP2}) and (\ref{transf2}) is proved in the same way.

\begin{remark}\label{jump-locus} Besides the fact that this proof  of the above equalities  is slightly simpler than the original one in \cite{ens}, an advantage of the present approach to cohomology of $\mathbb Q$-twisted sheaves  is that equalities (\ref{PP1}), (\ref{PP2}) and (\ref{eq:PP3}) hold for all points $\alpha\in\widehat A$, not only for the general points. Although we will not do this in this paper, we point out that this makes possible a more refined analysis of how the Fourier-Mukai functor transforms the cohomological jump loci of $\mathbb Q$-twisted sheaves. Interesting particular cases of cohomological jump loci are "generalized base loci". We refer to Subsection \ref{base-loci} below for more on this.
\end{remark}

\begin{remark}\label{rem:fract-polarization}[Cohomological rank functions of fractional polarizations] Obviuosly one can  formally define cohomological rank functions for abelian varieties equipped with a \emph{fractional polarization} $\nu\l$ with $\nu\in\mathbb Q^+$ as follows
\[h^i_{A,\F,\nu\l}(\lambda):=h^i_{A,\F,\l}(\nu\lambda).
\]
Note that, except for (\ref{transf1}) and (\ref{transf2}), where the isogeny associated to an (integral) polarization plays a role, all the  results and formulas of the previous sections involving cohomological rank functions (namely  Lemma \ref{isogenies}, Proposition \ref{FMP}
%Corollary \ref{exchange2}%
) 
can be formally extended to this setting. In order to do that, one can formally define the type of $\nu\l$ as $(\nu d_1,\dots ,\nu d_g)$ and  the dual polarization $\widehat{\nu\l}:=\nu\hat\l$. Then, for example, the transformation formula (\ref{FMP1}) extends as
\[
h^i_{A,\F,\nu\l}(\lambda)=\chi(\nu\l)(-\lambda)^g h^i_{\widehat A, \Phi_\cP(\F),\widehat{\nu\l}}\,(-\frac{1}{\nu^2d_1d_g\lambda})
\]
Indeed 
\begin{align*}h^i_{A,\F,\nu\l}(\lambda)=h^i_{A,\F,\l}(\nu\lambda)
&\buildrel{(\ref{FMP1})}\over=\chi(\l)(-\nu\lambda)^g h^i_{\widehat A, \Phi_\cP(\F),\hat{\l}}\,(-\frac{1}{d_1d_g\nu\lambda})\\
&=\chi(\l)(-\nu\lambda)^g h^i_{\widehat A, \Phi_\cP(\F),\nu\hat{\l}}\,(-\frac{1}{d_1d_g\nu^2\lambda})\\
&=\chi(\nu\l)(-\lambda)^g h^i_{\widehat A, \Phi_\cP(\F),\widehat{\nu\l}}\,(-\frac{1}{(\nu d_1)(\nu d_g)\lambda}).
\end{align*}
\end{remark}

%%%%%%%%%%%%%%%%
%%%%%%%%%%%%%%%

\section{$\mathbb Q$-twists of the ideal  sheaf of one point and duality}

%%%
\subsection{The cohomological rank functions and their thresholds. }\label{subs:interp} We will consider the cohomological rank functions of the ideal sheaf of a closed point of a given polarized abelian variety $(A,\l)$. For simplicity we take the origin $e$  as closed point (however the functions are independent on this choice). 

For $\lambda\in\mathbb Q^+$ the cohomological jump loci for the $i$-th cohomology of the $\mathbb Q$-twisted sheaf $\I_e\langle\lambda\l\rangle$ are emply for $i\ne 0,1$ (see the terminology in Subsection \ref{IT}). Moreover, it is easy to see that if $\I_e\langle\lambda\l\rangle$ is IT(0) (respectively IT(1)) then the same holds for all $\lambda^\prime\ge \lambda$ (resp. $\lambda^\prime\le\lambda$). Therefore it is natural to consider the two thresholds
\[
\beta^0_{A}(\l)=\mathrm{sup}\{\lambda\in\mathbb Q^+\>|\>\I_e\langle\lambda\l\rangle \>\hbox{is IT(1)}\}\qquad \beta^1_{A}(\l)=\mathrm{inf}\{\lambda\in\mathbb Q^+\>|\>\I_e\langle\lambda\l\rangle \>\hbox{is IT(0)}\}
\]

\begin{remark}\label{GV} 
It is easy to see that $\beta_{A}^{1}(\l)=\inf\{\lambda\in\mathbb Q^+\>|\> h^1_{A,\I_e,\l}(\lambda)=0\}.$  Therefore $\beta_{A}^{1}(\l)$ coincides with the threshold $\beta(\l)$ defined at the beginning of \cite[\S8]{ens}. 
We recall also that, given $\bar\lambda\in\mathbb Q^+$,  
\begin{equation}\label{bar-lambda}\bar\lambda=\beta^1_A(\l)
\end{equation} 
 if and only if the cohomological support locus of $\I_e\langle\bar \lambda\l\rangle$ for the $1$-th cohomology   is a non-empty proper subvariety of $\widehat A$.  Because of the vanishing of the $h^i$ for $i\ge 2$, this means that $\I_e\langle\bar\lambda\l\rangle$ is GV (but not IT(0)). In turn this is equivalent to (\ref{bar-lambda})  by (2) of Subsection \ref{IT}. 
 
 Also the similar thing for $\beta^0_A(\l)$ holds, namely that  $\beta_{A,}^{0}(\l)=\sup\{\lambda\in\mathbb Q^+\>|\> h^0_{A,\I_e,\l}(\lambda)=0\}$. We leave this to the interested reader.
\end{remark}
%%%
\subsection{Interpretation in terms of semihomogeneous bundles. }\label{subs:interpretation} In view of Proposition \ref{fractional}, semihomogeneous vector bundles allow  explicit interpretations of cohomological properties of $\mathbb Q$-twisted sheaves. In the case of the ideal sheaf of a point this is as follows. As usual, given a polarized abelian variety $(A,\l)$, and a positive rational number $\lambda$, let $E_{A,\lambda\l}$ be any simple vector bundle in $\mathbb S_{A,\lambda\l}$.  Let us consider the evaluation map 
\[
\mathrm{ev}_{A,\lambda\l}: H^0(A,E_{A,\lambda\l})\otimes\OO_A\rightarrow E_{A,\lambda\l}.
\]
 Then: \emph{ for $i=0$ (respectively, $i=1$) the value $h^i_{A,\I_e,\l}(\lambda)$ is the generic rank of the kernel (resp. cokernel)  of the  map $\mathrm{ev}_{A,\lambda\l}$, normalized by the rank $r_{A,\l}(\lambda)$. } Indeed, by Proposition \ref{fractional}, $h^i_{A,\I_e,\l}(\lambda)=\frac 1 {r_{A,\l}(\lambda)}h^i_{A,\I_e\otimes E_{A,\lambda\l},\l}(0)$.  From  the exact sequences
\begin{equation}\label{ex-seq}
0\rightarrow \I_e\otimes E_{A,\lambda\l}\otimes P_\alpha\rightarrow E_{A,\lambda\l}\otimes P_\alpha\rightarrow E_{A,\lambda\l}\otimes P_\alpha\otimes k(e)\rightarrow 0, 
\end{equation}
 recalling that the vector bundles $E_{A,\lambda\l}$ are IT(0) for $\lambda>0$, it follows that, for $i=0,1$, the cohomological ranks $h^i_{A,\I_e\otimes E_{A,\lambda\l},\l}(0)$ are respectively the dimension of the kernel and of the cokernel of the evaluation map  $H^0(A, E_{A,\lambda\l}\otimes P_\alpha)\rightarrow H^0(A, E_{A,\lambda\l}\otimes P_\alpha\otimes k(e))$ for general $\alpha\in\widehat A$. By Remark \ref{rem:translations} below, this map coincides with the evaluation map $H^0(A, t_x^*E_{A,\lambda\l})\rightarrow H^0(A, t_x^* E_{A,\lambda\l}\otimes k(e))$, i.e. the evaluation map  $H^0(A, E_{A,\lambda\l})\rightarrow H^0(A, E_{A,\lambda\l}\otimes k(-x))$, for general $x\in A$. 
 
 \begin{remark}\label{rem:translations}
It is useful to keep in mind the well known fact  that, for $\lambda\ne 0$, all simple vector bundles $E_{A,\lambda\l}\otimes P_\alpha\in\mathbb S_{A,\lambda\l}$ are translates $E_{A,\lambda\l}$. By (c)  of Subsection \ref{subs:semihom} (or Proposition \ref{basic}) this assertion follows from the same assertion for  non-zero multiples of ample line bundles. 
\end{remark}

In particular, the thresholds $\beta^i_A(\l)$ can be interpreted as follows 

\begin{proposition}\label{interpret} Let $\lambda\in\mathbb Q^+$. Then
\begin{enumerate} 
\item $\lambda>\beta^1_A(\l)$ if and only if $E_{A,\lambda\l}$ is globally generated. 
\item  $\lambda=\beta^1_A(\l)$ if and only if $E_{A,\lambda\l}$ is generically globally generated but not globally generated.
\item $\lambda<\beta^0_A(\l)$ if and only if every global section of $E_{A,\lambda\l}$ is nowhere vanishing.
\item $\lambda=\beta^0_A(\l)$ if and only  there are zeroes of global sections of $E_{A,\lambda\l}$ but they don't fill $A$, i.e. the evaluation map 
\begin{equation}\label{eq:ev}H^0(A,E_{A,\lambda\l})\otimes\OO_A\rightarrow E_{A,\lambda\l}
\end{equation}
 is injective as a map of sheaves (but not as a map of vector bundles). 
 \end{enumerate}
 \end{proposition}
 \begin{proof} (2) follows directly from the above interpretation and Remark \ref{GV}. (1) follows  in the same way. Items (c) and (d) are similar.
 \end{proof}

\begin{remark}\label{rem:equality} Clearly $\beta^0_A(\l)\le \beta^1_A(\l)$, and it can happen that equality holds. For example this is the case for principal polarizations, where in fact $\beta^0_A(\l)= \beta^1_A(\l)=1$. In general this happens if and only if  there is a $\lambda\in\mathbb Q^+$ such that the evaluation map $\mathrm{ev}_{A,\lambda\l}$ is injective and generically surjective.  If this is the  case  then $\beta^0_A(\l)= \beta^1_A(\l)=\lambda$. This yields that $\chi(E_{A,\lambda\l})=rk E_{A,\lambda\l}$, i.e., using Subsection \ref{subs:semihom}(b), $\lambda=\frac 1{\sqrt[g]{\chi(\l)}}$ (where $g=\dim A$).     (To this purpose we recall that, interestingly,  in \cite[Theorem A(1)]{rojas} Rojas shows that, for abelian surfaces $A$ with $NS(A)=\mathbb Z$ generated by $\l$, if $\sqrt{\chi(\l)}$ is integral then $\beta^0_A(\l)= \beta^1_A(\l)=\frac 1 {\sqrt{\chi(\l)}}$.)
\end{remark}

%%%%%
\subsection{Duality. } A key point for the results of this paper is the following simple lemma. For an abelian variety $A$ we denote $e$ its origin and $\hat e$ the origin of $\widehat A$. The corresponding skyscraper sheaves are denoted $k(e)$ and $k(\hat e)$. We use the notation (\ref{dualization}) fot the derived dual.
\begin{lemma}\label{duality} $\Phi_{\cP}(\I_e^\vee)=I_{\hat e}[-g+1].$
\end{lemma}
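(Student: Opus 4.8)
The plan is to compute $\Phi_{\cP}(\I_e^\vee)$ by resolving $\I_e^\vee$ and applying the Fourier--Mukai transform to the pieces, which have well-understood transforms. First I would observe that the short exact sequence $0\to\I_e\to\OO_A\to k(e)\to 0$ gives, upon applying $R\mathcal{H}om(-,\OO_A)$, a distinguished triangle relating $\I_e^\vee$, $\OO_A$, and $k(e)^\vee=R\mathcal{H}om(k(e),\OO_A)$. Since $e$ is a smooth point of $A$ (dimension $g$), local duality gives $k(e)^\vee\cong k(e)[-g]$. Hence there is a triangle
\[
\OO_A\longrightarrow \I_e^\vee\longrightarrow k(e)[-g+1]\longrightarrow \OO_A[1].
\]
Then I would apply the exact functor $\Phi_{\cP}$ to this triangle. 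We have the standard computations $\Phi_{\cP}(\OO_A)=k(\hat e)[-g]$ (the structure sheaf is IT($g$) with transform the skyscraper at the origin) and $\Phi_{\cP}(k(e))=\OO_{\widehat A}$ (the skyscraper at $e$ is IT($0$) with transform the trivial line bundle), using the conventions of \cite{mukai}. Therefore $\Phi_{\cP}(k(e)[-g+1])=\OO_{\widehat A}[-g+1]$, and the triangle becomes
\[
k(\hat e)[-g]\longrightarrow \Phi_{\cP}(\I_e^\vee)\longrightarrow \OO_{\widehat A}[-g+1]\longrightarrow k(\hat e)[-g+1].
\]

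Rotating, this says $\Phi_{\cP}(\I_e^\vee)[g-1]$ sits in a triangle $\OO_{\widehat A}\to \Phi_{\cP}(\I_e^\vee)[g-1]\to k(\hat e)[1]\to\OO_{\widehat A}[1]$, equivalently there is a triangle $k(\hat e)\to\OO_{\widehat A}\to\Phi_{\cP}(\I_e^\vee)[g-1]\to k(\hat e)[1]$. The connecting map $k(\hat e)\to\OO_{\widehat A}$ is the relevant datum: I must identify it with (a nonzero scalar multiple of) the natural inclusion, whose cone is exactly $\I_{\hat e}$. It cannot be zero, since otherwise $\Phi_{\cP}(\I_e^\vee)[g-1]$ would be $\OO_{\widehat A}\oplus k(\hat e)[1]$, which has the wrong cohomology sheaves (in particular a nonzero $H^{-1}$); one can rule this out by noting that $\I_e^\vee$ is a sheaf in degree $0$ concentrated away from any such splitting, or more cleanly by checking that $\Phi_{\cP}(\I_e^\vee)$ has no cohomology in degree $g-2$, which $\mathrm{Hom}(k(\hat e),\OO_{\widehat A})=0$ forces once the map is nonzero. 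Since $\mathrm{Hom}_{D(\widehat A)}(k(\hat e),\OO_{\widehat A}[1])\cong k$ (it is the transform of $\mathrm{Ext}^1_A(k(e),\OO_A[\,\cdot\,])$, or directly from the local structure of $\widehat A$ at $\hat e$), the nonzero connecting map is unique up to scalar, hence its cone is $\I_{\hat e}$. Concluding, $\Phi_{\cP}(\I_e^\vee)[g-1]\cong \I_{\hat e}$, i.e.\ $\Phi_{\cP}(\I_e^\vee)=\I_{\hat e}[-g+1]$.

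The main obstacle is the identification of the connecting morphism: showing it is nonzero and hence (by the one-dimensionality of the relevant $\mathrm{Hom}$ space) the canonical inclusion up to scalar. I expect the cleanest route is to argue that $\Phi_{\cP}(\I_e^\vee)$ is a genuine sheaf placed in cohomological degree $g-1$ — equivalently that $\I_e^\vee$ satisfies WIT in the appropriate degree — which pins down the triangle and forces the map to be nonzero. Alternatively one can invoke the involutivity of Fourier--Mukai: applying $\Psi_{\cP^\vee}$ (or using $\Psi_{\cP}\Phi_{\cP}\cong (-1_A)^*[-g]$) and the known transform of $\I_{\hat e}$, one reduces the claim to the dual statement $\Phi(\I_e)=$ (cone of $\OO_{\widehat A}\to k(\hat e))[-1]$-type expression, which is a standard computation and may be quoted. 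Everything else is formal manipulation of distinguished triangles and the textbook transforms of $\OO_A$ and $k(e)$.
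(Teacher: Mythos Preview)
Your approach is essentially the paper's (the paper uses the spectral sequence $R^p\Phi_{\cP}(\mathcal{E}xt^q(\I_e,\OO_A))\Rightarrow R^{p+q}\Phi_{\cP}(\I_e^\vee)$ rather than the distinguished triangle, but the content is identical). However, your rotation is wrong and this propagates into several incorrect claims. Shifting your triangle by $[g-1]$ gives
\[
k(\hat e)[-1]\longrightarrow \Phi_{\cP}(\I_e^\vee)[g-1]\longrightarrow \OO_{\widehat A}\longrightarrow k(\hat e),
\]
so the relevant connecting map is $\OO_{\widehat A}\to k(\hat e)$, not $k(\hat e)\to \OO_{\widehat A}$. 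There is no nonzero morphism $k(\hat e)\to\OO_{\widehat A}$ (the source is torsion, the target torsion-free), so your ``natural inclusion'' does not exist; and your assertion $\Hom_{D(\widehat A)}(k(\hat e),\OO_{\widehat A}[1])\cong k$ is false, since $\Ext^i(k(\hat e),\OO_{\widehat A})=0$ for $i<g$.

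With the correct orientation the argument is actually simpler than you suggest. The map $\OO_{\widehat A}\to k(\hat e)$ is the image under the equivalence $\Phi_{\cP}$ of the nonzero map $k(e)[-g]\to\OO_A$ coming from the dualized ideal sequence (the fundamental class in $\Ext^g(k(e),\OO_A)\cong k$), hence it is nonzero; every nonzero element of $\Hom(\OO_{\widehat A},k(\hat e))\cong k$ is surjective with kernel $\I_{\hat e}$, and the triangle then gives $\Phi_{\cP}(\I_e^\vee)[g-1]\cong\I_{\hat e}$. This is exactly the exact sequence $0\to R^{g-1}\Phi_{\cP}(\I_e^\vee)\to\OO_{\widehat A}\to k(\hat e)\to 0$ that the paper obtains from its spectral sequence. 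No appeal to WIT or to Fourier--Mukai involutivity is needed.
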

\begin{proof} Dualizing the trivial exact sequence $0\rightarrow\I_e\rightarrow\OO_A\rightarrow k(e)\rightarrow 0$ we get that 
\[
\mathcal{E}xt^i(\I_e,\OO_A)=\begin{cases}\OO_A&\hbox{for} \> i=0\\ k(e)&\hbox{for}\>i=g-1\\ 0&\hbox{otherwise}\end{cases}.
\]
 It is well known that $\Phi_{\cP}(\OO_A)=k(\hat e)[-g]$ (\cite[Proof of Theorem in \S 13]{mumford}), and it is clear that $\phi_{\cP}(k(e))=\OO_{\widehat A}[0].$ Therefore the spectral sequence $R^p\Phi_{\cP}(\mathcal{E}xt^q(\I_e,\OO_A)\Rightarrow R^{p+q}\Phi_{\cP}(\I_e^\vee)$ collapses giving the exact sequence $0\rightarrow R^{g-1}\Phi_{\cP}(\I_e^\vee) \rightarrow \OO_{\widehat A}\rightarrow k(\hat e)\rightarrow 0$ and the vanishings $R^{i}\Phi_{\cP}(\I_e^\vee)=0$ for $i\ne g-1$.
\end{proof}
Plugging Lemma \ref{duality} into the second formula of Proposition \ref{FMP} we get the following fundamental symmetry between the cohomological rank functions of the ideal of one point on an abelian variety and on its dual. 

\begin{theorem}\label{exchange2} For $\lambda\in\mathbb Q^+$ and $i=0,1$
\[h^i_{A,\I_e,\l}(\lambda)=h^{1-i}_{\widehat A,\I_{\hat e},\hat\l}(\frac 1 {d_1d_g\lambda}).
\]
Therefore 
\[\beta^i_A(\l)=\frac 1 {d_1d_g\beta^{1-i}_{\widehat A}(\hat\l)}.
\]
\end{theorem}
\begin{remark}\label{rem:fract2} Also the previous Theorem formally extends to the setting of fractional polarizations (see Remark \ref{rem:fract-polarization}), giving
\[h^i_{A,\I_e,\nu\l}(\lambda)=h^{1-i}_{\widehat A,\I_{\hat e},\widehat{\nu\l}}(\frac 1 {\nu^2d_1d_g\lambda}).
\]
\end{remark}

%%%%
\section{Multiplication maps of global sections and their thresholds. }\label{sect:multip}

%A useful technical remark is that the simple   vector bundle $E_{A,\frac\lambda{1-\lambda}\l}$ can be replaced by any  vector bundle in $\mathbb S_{A,\frac\lambda{1-\lambda}\l}$ (see Remark \ref{rem:any} below). 
%Assume that the line bundle $L$ is base point free and denote $M_L$ the kernel bundle, defined by the exact  sequence \begin{equation}\label{ML}0\rightarrow M_L\rightarrow H^0(L)\otimes\OO_A\rightarrow L\rightarrow 0.\end{equation}Then\[\varphi_\l^*(\Phi_{\cP}(\I_e\otimes L))\cong M_L\otimes L^{-1}[0].\footnote{in $\l$ is not base point free it is in any case effective, sequence (\ref{ML}) is not right exact and $M_L$ is not necessarily locally free. If this is the case we have that $\varphi_\l^*(R^i\Phi_{\cP}(\I_e\otimes L))$ is equal to $M_lL$ for $i=0$, to the coker of the evaluation map for $i=1$ and zero otherwise.}\]

%%%%%%%%
 
\subsection{Relation between cohomological rank functions. } Here we will establish  a relation between the cohomological rank functions of the ideal of a point and the ones of the evaluation complex of a suitable simple semihomogeneous vector bundle. This  generalizes  \cite[\S8]{ens} (with a slightly simpler treatment).

\vskip0.2truecm We have  the following Fourier-Mukai lemma.

\begin{lemma}\label{lem:fractional} Let $E_{A,\nu\l}$ be a simple semihomogeneous vector bundle in $\mathbb S_{A,\nu \l}$, where $\nu=\frac a b$, with $a,b>0$. Applying the functor $\varphi_{a\l}^*\Phi_{\cP}$ to the restriction map
\[
E_{A,\nu\l}\rightarrow E_{A,\nu\l}\otimes k(e)
\]
one gets the twisted evaluation map
\[b_A^*H^0(A,E_{A,\nu\l})\otimes L^{\otimes -ab}\rightarrow \OO_A^{\oplus r_{A,\l}(\nu)}\cong (b_A^*E_{A,\nu\l})\otimes L^{\otimes -ab}
\]
where $r_{A,\l}(\nu)=\rk E_{A,\nu\l}$ and $L$ is a line bundle representing $\l$ such that $b_A^*E\cong (L^{\otimes ab})^{\oplus r_{A,\l}(\nu)}$ \emph{(Proposition \ref{basic})}. 
\end{lemma}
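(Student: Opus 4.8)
The plan is to compute the Fourier--Mukai transform of the restriction map $E_{A,\nu\l}\to E_{A,\nu\l}\otimes k(e)$ directly, keeping track of how the twist by $\varphi_{a\l}^*$ normalizes everything. First I would recall from Proposition \ref{FMP-semihom} that $E_{A,\nu\l}$ is $\mathrm{IT}(0)$ (since $\nu>0$), so $\Phi_\cP(E_{A,\nu\l})=\widehat{E_{A,\nu\l}}[0]$ is an honest vector bundle sitting in degree $0$, and by that same proposition it is a semihomogeneous bundle on $\widehat A$ of slope $-\tfrac{1}{d_1d_g\nu}\hat\l$. On the other hand $\Phi_\cP(k(e))=\OO_{\widehat A}[0]$, since the structure sheaf of the origin transforms to the trivial bundle. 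Thus the transformed map is a morphism of vector bundles on $\widehat A$
\[
\widehat{E_{A,\nu\l}}\longrightarrow H^0(A,E_{A,\nu\l})\otimes\OO_{\widehat A},
\]
where I have identified $H^0(\widehat A,\OO_{\widehat A})\otimes\OO_{\widehat A}$ with $H^0(A,E_{A,\nu\l})\otimes \OO_{\widehat A}$ via the canonical identification of $H^0(A,E_{A,\nu\l})$ with the global sections recovered from the transform of $k(e)$ (this is the ``evaluation'' being dualized; one should note it is the transpose/``co-evaluation'' of the usual evaluation map, which is why the arrow points the way it does).

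Next I would pull back along $\varphi_{a\l}:A\to\widehat A$. Here the key input is Proposition \ref{basic} applied to the dual side, together with formula (\ref{Phi}): for a simple bundle of slope $\nu\hat\l$ type data, the relevant isogeny trivializing it is the one built from $(b_{\widehat A},\varphi_{a\hat\l})$, and composing with the standard identity $\varphi_\l^*\hat\l=d_1d_g\l$ (\ref{dual3}) shows that $\varphi_{a\l}^*$ of a line bundle representing $-\tfrac{1}{d_1d_g\nu}\hat\l$ becomes (after the appropriate multiplication-by-$b$ business) exactly $L^{\otimes -ab}$ on $A$. More precisely, since $\widehat{E_{A,\nu\l}}\in\mathbb S_{\widehat A,-\frac1{d_1d_g\nu}\hat\l}$, its pullback $\varphi_{a\l}^*\widehat{E_{A,\nu\l}}$ lies in $\mathbb S_{A,-\frac{a}{d_1d_g\nu}\cdot d_1d_g\l}=\mathbb S_{A,-b\l}$ (using $\nu=a/b$), hence by Proposition \ref{basic}-type reasoning it is $(L^{\otimes -ab})^{\oplus r}$ after a further $b_A^*$; running the computation so that the target $b_A^*E_{A,\nu\l}\otimes L^{\otimes -ab}\cong\OO_A^{\oplus r}$ (again Proposition \ref{basic}) pins down the source as $b_A^*H^0(A,E_{A,\nu\l})\otimes L^{\otimes -ab}$ exactly as claimed. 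The map is then visibly the evaluation map of $E_{A,\nu\l}$, pulled back by $b_A$ and twisted by $L^{\otimes -ab}$.

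The main obstacle, I expect, is bookkeeping rather than conceptual: one must be careful that the functor in the statement is $\varphi_{a\l}^*\Phi_\cP$ and not $\varphi_{\nu\l}^*\Phi_\cP$ or a $\cP^\vee$-version, so the precise isogeny in (\ref{Phi}) must match the slope $a\l$ (not $\nu\l$), and the factors of $b$ coming from $b_A^*$ must be reconciled with the factors $d_1d_g$ from the dual polarization via (\ref{dual2})--(\ref{dual3}); getting all these to cancel to give exactly $L^{\otimes -ab}$ and rank exactly $r_{A,\l}(\nu)$ is where sign/index errors would creep in. A secondary subtlety is checking that $\Phi_\cP(k(e))$ contributes the trivial summand $\OO_A^{\oplus r}$ with the correct identification of $H^0$, i.e.\ that the transformed restriction map really is the (pulled-back, twisted) evaluation map and not its dual; this follows from the fact that $\Phi_\cP$ sends the natural map $\OO_A\to k(e)$ to $k(\hat e)[-g]\to\OO_{\widehat A}$ and then tensoring/twisting and applying the projection formula for $b_A$, but it deserves to be spelled out. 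Once these normalizations are settled, the conclusion is immediate from Proposition \ref{basic} and Proposition \ref{FMP-semihom}.
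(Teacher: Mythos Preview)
Your overall strategy matches the paper's: apply $\Phi_\cP$ to the restriction map, identify the source as $\widehat{E_{A,\nu\l}}$ and the target as a trivial bundle, then pull back by $\varphi_{a\l}$. However, there are several concrete errors and one genuine gap.

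First, the bookkeeping. The Fourier--Mukai transform of $E_{A,\nu\l}\otimes k(e)$ is $\OO_{\widehat A}^{\oplus r_{A,\l}(\nu)}$ (the fiber at $e$ has dimension $r$, not $h^0$), so your displayed map on $\widehat A$ should have target $\OO_{\widehat A}^{\oplus r}$, not $H^0(A,E_{A,\nu\l})\otimes\OO_{\widehat A}$; the identification you propose between these does not make sense dimensionally. Second, your slope computation is off: since $\varphi_{a\l}=\varphi_\l\circ a_A$, one has $\varphi_{a\l}^*\hat\l=a^2d_1d_g\,\l$, so the pullback $\varphi_{a\l}^*\widehat{E_{A,\nu\l}}$ has slope $-ab\,\l$, not $-b\,\l$. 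Third, the $b_A^*$ in the lemma's conclusion is not an additional functor being applied after $\varphi_{a\l}^*\Phi_\cP$; it is part of the \emph{description} of the target map as the $b_A$-pullback of the evaluation map, twisted by $L^{\otimes -ab}$.

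The substantive gap is the splitting of $\varphi_{a\l}^*\widehat{E_{A,\nu\l}}$. Even with the correct (integral) slope $-ab\,\l$, a semihomogeneous bundle of integral slope is only known to be a direct sum of \emph{potent} bundles built from line bundles (Subsection~\ref{subs:semihom}(d)); it need not split as $(L'^{\otimes -ab})^{\oplus h^0}$. Proposition~\ref{basic} does not apply here because that result concerns the specific isogeny $b_A$ acting on a \emph{simple} bundle, whereas $\varphi_{a\l}^*\widehat{E_{A,\nu\l}}$ has rank $h^0(E_{A,\nu\l})>1$. The paper fills this gap by invoking Mukai's criterion (Subsection~\ref{subs:semihom}(c)): an isogeny $\psi$ splits a simple semihomogeneous bundle $E$ into copies of a line bundle precisely when $\psi$ factors through the projection $\Phi(E)\to A$. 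The paper computes $\Phi(\widehat{E_{A,\nu\l}})=\mathrm{Im}((ad_1d_g)_{\widehat A},\varphi_{b\hat\l})$ via (\ref{Phi}), and then uses the factorization $(ad_1d_g)_{\widehat A}=\varphi_{a\l}\circ\varphi_{\hat\l}$ from (\ref{dual2}) to verify that $\varphi_{a\l}$ does factor as required. This step is the technical heart of the proof and is missing from your outline.
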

\begin{proof} We know that $E_{A,\nu\l}$ is an IT(0) vector bundle.  We claim that 
\begin{equation}\label{eq:pullback}
\varphi_{a\l}^*\widehat{E_{A,\nu\l}}\cong ({L^\prime}^{\otimes -ab})^{\oplus h^0(A,E_{A,\nu\l})} 
\end{equation}
 where  $L^\prime$ is a line bundle representing $\l$.  To prove this we argue as in the proof of Proposition \ref{basic}. We have that  $\rk \widehat{E_{A,\nu\l}}=h^0(A,E_{A,\lambda\l})$. Moreover
 $\widehat{E_{A,\nu\l}}$ is simple and belongs to $\mathbb S_{\widehat A,-\frac 1{d_1d_g\nu} \hat \l}$ (Proposition \ref{FMP-semihom}).  Therefore, according to Subsection \ref{subs:semihom}(c), we have that  the subgroup $\Phi(\widehat{E_{A,\nu\l}})\subset \widehat A\times A$ is the image of the morphism $((ad_1d_g)_{\widehat A},\varphi_{b\hat\l}):\widehat A\rightarrow \widehat A\times A$, and  any isogeny $\psi:B\rightarrow \widehat A$  factoring trough the restriction to $\Phi(\widehat{E_{A,\nu\l}})$  of the first projection of $\widehat A\times A$ is such that $\psi^*\widehat{E_{A,\nu\l}}\cong M^{\oplus h^0(A,E_{A,\nu\l})}$ for a line bundle $M$. Since  $(ad_1d_g)_{\widehat A}=\varphi_{a\l}\varphi_{\hat \l}$ ((\ref{dual2})), it is easily seen that in fact $\varphi_{a\l}:A\rightarrow \widehat A$ satisfies such a condition. Therefore  $\varphi_{a\l}^*\widehat{E_{A,\lambda\l}}\cong M^{\oplus h^0(A,E_{A,\nu\l})} $. To see who is $M$, recall that
 by Proposition \ref{basic}, we know that $(ad_1d_g)_{\widehat A}^*\widehat{E_{A,\nu\l}}\cong ({L^\prime}^{\otimes -abd_1d_g})^{\oplus h^0(A,E_{A,\nu\l})}$, where $L^\prime$ is a line bundle representing $\hat\l$.  From this, using again that $(ad_1d_g)_{\widehat A}=\varphi_{a\l}\varphi_{\hat \l}$ and that $\varphi_{\hat\l}^*\l=d_1d_g\hat\l$ ((\ref{dual3})),  the claimed (\ref{eq:pullback}) follows. 

Next, we need to understand the induced map
 \begin{equation}\label{eq:morphism}
 \varphi_{a\l}^*(\widehat{E_{A,\nu\l}})\rightarrow \varphi_{a\l}^*(\widehat {E_{A,\nu\l}\otimes k(0)}).
 \end{equation} 
Concerning the target, we have the isomorphism 
 $\varphi_{a\l}^*(\widehat {E_{A,\nu\l}\otimes k(0)}) = \OO_A^{\oplus r_{A,\l}(\nu)}$.  By (\ref{eq:pullback}), the map (\ref{eq:morphism}) is identified to a morphism $ ({L^\prime}^{\otimes -ab})^{\oplus h^0(A,E_{A,\nu\l})}\rightarrow    \OO_A^{\oplus r_{A,\l}(\nu)}$. Recalling how the Fourier-Mukai transform operates, we have that (\ref{eq:morphism}) is an evaluation map of global sections of $E_{A,\nu\l}$ tensored by ${L^\prime}^{\otimes -ab}$. Since we know by Proposition \ref{basic} that $b_A^*E_{A,\nu\l}\otimes L^{\otimes -ab}$ is trivial (for a suitable line bundle $L$ representing $\l$), it follows that (\ref{eq:morphism}) is identified to a twisted evaluation map 
 $ b_A^*H^0(A,E_{A,\nu\l})\otimes {L^\prime}^{\otimes -ab}\rightarrow b_A^*E_{A,\nu\l}\otimes L^{\otimes -ab}$. It follows that $L^{\otimes -ab}={L^\prime}^{\otimes -ab}$ and the proof is complete.
 \end{proof}
%%%
Let us denote $EV_{A, \nu\l}^\bullet$ the complex 
\begin{equation}\label{eq:ev}
0\rightarrow H^0(A,E_{A,\nu\l})\otimes \OO_A \rightarrow E_{A,\nu\l}\rightarrow 0
\end{equation}
(see Remark \ref{abuse} below about the notation).
The basic result we are interested in  is the following relation between the cohomological rank functions of the ideal sheaf of the origin and of the above complex. We use the notation of Remark \ref{rem:fract-polarization} about cohomological rank functions associated to fractional polarizations.

\begin{proposition} \label{prop:main-rel-pre} Let $\nu\in\mathbb Q^+$ and $\lambda\in (0,\nu)\cap \mathbb Q$. Then, in the notation of Remark \ref{rem:fract-polarization},
\begin{equation}\label{eq:h-nu-pre}
h^i_{A,\I_e,\nu\l}(\lambda)=\frac{(1-\lambda )^g} {\chi(\nu\l)r_{A,\l}(\nu)}\ h^i_{A, EV_{\nu\l}^\bullet,\nu\l}(\frac{\lambda}{1-\lambda})
\end{equation}
\end{proposition}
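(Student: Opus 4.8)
The plan is to deduce (\ref{eq:h-nu-pre}) from the Fourier--Mukai transformation formula (\ref{FMP2}) (in its fractional-polarization form of Remark \ref{rem:fract-polarization}) applied to the ideal sheaf $\I_e$, followed by an identification of the resulting Fourier--Mukai transform with the evaluation complex $EV^\bullet_{A,\nu\l}$ via Lemma \ref{lem:fractional}. First I would write $\nu=\frac ab$ with $a,b>0$ and apply the fractional version of (\ref{FMP2}) (or more conveniently (\ref{transf2})) to $\F=\I_e$: since $\lambda\in(0,\nu)$, setting $\mu=\lambda/\nu\in(0,1)$ we get
\[
h^i_{A,\I_e,\nu\l}(\lambda)=h^i_{A,\I_e,\l}(\lambda)=\frac{(\lambda/\nu)^g\cdot\text{(something)}}{\chi(\l)}\,h^{g-i}_{A,\varphi_\l^*\Phi_{\cP^\vee}(\I_e^\vee),\l}\Bigl(\tfrac{\nu}{\lambda}\Bigr),
\]
so the first step is really just bookkeeping of the constants, tracking how $\chi(\nu\l)$, the type $(\nu d_1,\dots,\nu d_g)$, and the factor $\nu/\lambda=1/\mu$ interact. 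The key input is that $\Phi_{\cP^\vee}(\I_e^\vee)$, by Lemma \ref{duality} (applied with $\cP^\vee$ in place of $\cP$, or by dualizing the statement there), is computed in terms of $\I_{\hat e}$ up to a shift — this is what converts twists of $\I_e$ into something involving evaluation maps on the dual side.

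Next I would reinterpret the right-hand cohomological rank function as that of the complex $EV^\bullet_{A,\nu\l}$. The mechanism is exactly Lemma \ref{lem:fractional}: applying $\varphi_{a\l}^*\Phi_{\cP}$ to the restriction map $E_{A,\nu\l}\to E_{A,\nu\l}\otimes k(e)$ yields (after untwisting by $L^{\otimes -ab}$) the evaluation map $H^0(A,E_{A,\nu\l})\otimes\OO_A\to E_{A,\nu\l}$, i.e. the complex $EV^\bullet_{A,\nu\l}$. Dually, the cone of that restriction map is $\I_e\otimes E_{A,\nu\l}[1]$, and by Proposition \ref{fractional} the cohomological rank functions of $\I_e\otimes E_{A,\nu\l}$ are $r_{A,\l}(\nu)$ times those of the $\mathbb Q$-twisted object $\I_e\langle\nu\l\rangle$. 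So I would set up the distinguished triangle
\[
\I_e\otimes E_{A,\nu\l}\to E_{A,\nu\l}\to E_{A,\nu\l}\otimes k(e)\to{}
\]
apply $\varphi_{a\l}^*\Phi_{\cP}$ (an exact functor on $D(A)$, being a composition of the FM equivalence with pullback along an isogeny), use Lemma \ref{lem:fractional} to identify the middle and right terms, and thereby identify the FM transform of $\I_e\otimes E_{A,\nu\l}$ — up to the twist $L^{\otimes -ab}$ and a shift — with $EV^\bullet_{A,\nu\l}$. Matching the shift $[-g+?]$ coming from Lemma \ref{duality} against the index $i\leftrightarrow g-i$ in (\ref{FMP2}) should turn out to give exactly $h^i\leftrightarrow h^i$ on the two sides of (\ref{eq:h-nu-pre}), not a reversed index — which is consistent with the statement.

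The main obstacle I anticipate is \textbf{tracking the constants and the twist precisely}. There are several multiplicative factors in play — the $\chi(\nu\l)$ and $\lambda^g$-type factors from (\ref{FMP2}), the rank $r_{A,\l}(\nu)$ from Proposition \ref{fractional}, the degree $\chi(\l)^2$ of $\varphi_\l$ (or $\chi(a\l)^2$ of $\varphi_{a\l}$) entering through Lemma \ref{isogenies}, and the substitution $\lambda\mapsto\lambda/(1-\lambda)$ which must come out of composing $\mu=\lambda/\nu$ with the FM involution $\mu\mapsto$ (its reciprocal, suitably normalized). Getting the exponent $(1-\lambda)^g$ and the denominator $\chi(\nu\l)\,r_{A,\l}(\nu)$ exactly right, rather than off by a power of $\nu$, $d_1d_g$, or $b$, is the delicate part; everything else is a formal consequence of Lemma \ref{lem:fractional}, Lemma \ref{duality}, Proposition \ref{fractional} and Proposition \ref{FMP}. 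I would double-check the normalization by testing the identity at an integral value of $\lambda$ against the known case treated in \cite[\S8]{ens}, and by checking that both sides have the same $\chi$ (Euler characteristic), since $\chi(\I_e\langle\lambda\nu\l\rangle)=\chi(\nu\l)\lambda^g-1$ must be reproduced by the right-hand side via the complex $EV^\bullet$.
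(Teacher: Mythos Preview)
Your second paragraph captures the paper's argument almost exactly: one represents $\I_e\otimes E_{A,\nu\l}$ by the two-term complex $E_{A,\nu\l}\to E_{A,\nu\l}\otimes k(e)$, applies $\varphi_{a\l}^*\Phi_\cP$, and uses Lemma \ref{lem:fractional} to identify the result with $(b_A^*EV_{A,\nu\l}^\bullet)\otimes L^{\otimes -ab}$; Proposition \ref{fractional} then supplies the factor $r_{A,\l}(\nu)$ relating $\I_e\otimes E_{A,\nu\l}$ to $\I_e\langle\nu\l\rangle$. The bookkeeping you anticipate is indeed the only remaining work.

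Where your plan goes astray is the first paragraph. The paper does \emph{not} apply the positive-twist formula (\ref{FMP2})/(\ref{transf2}) to $\I_e$, and Lemma \ref{duality} plays no role in this proof. The point is that once you invoke Proposition \ref{fractional} to write
\[
h^i_{A,\I_e,\nu\l}(1+t)=\frac 1{r_{A,\l}(\nu)}\,h^i_{A,\I_e\otimes E_{A,\nu\l},\l}(\nu t),\qquad t\in(-1,0),
\]
the twist parameter $\nu t$ is \emph{negative}, so it is the formula (\ref{transf1}) for $\lambda<0$ that applies to $\F=\I_e\otimes E_{A,\nu\l}$. This is not a cosmetic choice: (\ref{transf1}) involves $\varphi_{a\l}^*\Phi_\cP(\F)$, which is exactly the functor computed by Lemma \ref{lem:fractional}, and it keeps the same cohomological index $i$ on both sides, so no shift-matching against $i\leftrightarrow g-i$ is needed. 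By contrast, your proposed route through (\ref{FMP2}) and Lemma \ref{duality} lands you on $\widehat A$ with $\I_{\hat e}$ --- that is the content of Corollary \ref{exchange2}, a different statement --- and does not connect directly to $EV^\bullet_{A,\nu\l}$, which lives on $A$. So drop the first paragraph entirely: start with Proposition \ref{fractional}, then (\ref{transf1}), then Lemma \ref{lem:fractional}, then substitute $\lambda=1+t$.
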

\begin{proof}  Let $t\in\mathbb Q\cap(-1,0)$ and write $\nu=\frac a b$.  We have that 
\begin{equation}\label{eq:1}
h^i_{A,\I_e,\nu\l}(1+t)=h^i_{A,\I_0,\l}(\nu(1+t))=\frac 1 {r_{A,\l}(\nu)} h^i_{A,\I_0\otimes E_{A,\nu\l},\l}(\nu  t )=\frac 1 {r_{A,\l}(\nu)} h^i_{A,\I_0\otimes E_{A,\nu\l},a\l}(\frac t b),
\end{equation}
where the second equality follows from Proposition \ref{fractional}.  Since $\I_e\otimes E_{A,\nu\l}$ is isomorphic (in $D(A)$) to the complex $0\rightarrow E_{A,\nu\l}\buildrel{res}\over\rightarrow E_{A,\nu\l}\otimes k(e)\rightarrow 0$, it follows from Lemma \ref{lem:fractional} that $\varphi_\l^*\Phi_{\cP}(\I_e\otimes E_{A,\nu\l})$ is isomorphic, in $D(A)$, to the complex $(b_A^*EV_{A,\nu\l}^\bullet)\otimes L^{-ab}$.
 Thus   the  Fourier-Mukai transform formula (\ref{transf1})  yields that
\begin{equation}
h^i_{A,\I_0\otimes E_{A,\nu\l},a\l}(\frac t b)=\frac{(-\frac t b )^g} {\chi(a\l)} h^i_{A, (b_A^*EV_{A,\nu\l}^\bullet)\otimes L^{\otimes -ab},a\l}(-\frac b t )
\end{equation}
Manipulating the right hand side we get
\begin{align*}
\frac{(-\frac t b )^g} {\chi(a\l)} h^i_{A, (b_A^*EV_{A,\nu\l}^\bullet)\otimes L^{\otimes -ab},a\l}(-\frac b t )&=\frac{(-\frac t b )^g} {\chi(a\l)} h^i_{A, b_A^*EV_{A, \nu\l}^\bullet,a\l}(-\frac b t-b )\\
&=\frac{(-\frac t b )^g} {\chi(a\l)} h^i_{A, b_A^*EV_{A,\nu\l}^\bullet,\l}(a(-\frac b t-b ))\\
&=\frac{(-\frac t b )^g} {\chi(a\l)} b^{2g} h^i_{A, EV_{A,\nu\l}^\bullet,\l}(\frac a{b^2}(-\frac b t-b ))\\
&=\frac{(- t  )^g} {\chi(\nu\l)} h^i_{A, EV_{A, \nu\l}^\bullet,\l}(-\frac{\nu(1+t)} t)\\
%&=\frac{(- t  )^g} {\chi(\nu\l)r_{A,\l}(-\frac{(1+t)\nu} t)}\ h^i_{A, EV_{\nu\l}^\bullet\otimes E_{A,-\frac{(1+t)} t \nu\l}}(0)
\end{align*}
Combining with (\ref{eq:1}) we get
\[
h^i_{A,\I_e,\nu\l}(1+t)= \frac{(- t  )^g} {\chi(\nu\l)r_{A,\l}(\nu) } h^i_{A, EV_{A,\nu\l}^\bullet,\nu\l}(-\frac{1+t} t)
\]

The Proposition follows letting $\lambda=1+t$. 
\end{proof}

\subsection{Interpretation in terms of semihomogeneous vector bundles. }\label{interpret-multi} By Proposition \ref{fractional}, (\ref{eq:h-nu-pre}) can be equivalently written as follows
\begin{equation}\label{eq:h-nu}
h^i_{A,\I_e,\nu\l}(\lambda)=\frac{(1-\lambda )^g} {\chi(\nu\l)r_{A,\l}(\frac{\lambda}{1-\lambda})r_{A,\l}(\nu)}\ h^i_{A, EV_{A, \nu\l}^\bullet\otimes E_{A,\frac{\lambda}{1-\lambda} \nu\l},\nu\l}(0)
\end{equation}

For $\lambda\in \mathbb Q\cap (0,\nu)$ both the vector bundles  $E_{A,\frac \lambda {1-\lambda}\nu\l}$ and  $E_{A,\nu\l}\otimes E_{A,\frac \lambda {1-\lambda}\nu\l}$ are IT(0). Therefore the appropriate spectral sequence computing the hypercohomology of the complex $EV_{A, \nu\l}^\bullet\otimes E_{A,\frac \lambda {1-\lambda}\l}\otimes P_\alpha$ collapses to the cohomology of the complex
\begin{equation}\label{complex}
\xymatrix{
0\rightarrow H^0(A, E_{A,\nu\l})\otimes H^0(A, E_{A,\frac \lambda {1-\lambda}\nu\l}\otimes P_\alpha)\ar[rr]^{m^{\frac \lambda {1-\lambda}}_{A, \nu\l, \alpha}}&& H^0(A, E_{A,\nu\l}\otimes E_{A,\frac \lambda {1-\lambda}\nu\l}\otimes P_\alpha)\rightarrow 0.}
\end{equation}
where we used the notation for the multiplication map of global sections is the one of the Introduction, see (\ref{eq:multifrac}) .

Hence the cohomology ranks $h^i_{A, EV_{A,\nu\l}^\bullet\otimes E_{A,\frac{\lambda}{1-\lambda} \nu\l},\nu\l}(0)$, $i=0,1$ are nothing else than the dimensions of the kernel and of the cokernel of the multiplication map of global sections
$m^{\frac \lambda {1-\lambda}}_{A, \nu\l, \alpha}$ for generic $\alpha\in\widehat A$. 

\begin{remark}\label{abuse} The notation $m^{y}_{A,\nu\l,\alpha}$ for the maps of multiplication of global sections, as well as $EV_{A,\nu\l}^\bullet$ for the evaluation complex (\ref{eq:ev}), are in fact a simplifying  abuse, because both the maps and the complexes do depend on the representing semihomogeneous bundles chosen. However the cohomological rank functions built on them, as well as the thresholds, do not depend on that.
\end{remark}

Combining (\ref{eq:h-nu}) for $\nu=1$ and Corollary \ref{exchange2} we get the following Corollary, which ultimately can be seen as a duality relation between the $\mathbb Q$-graded modules $\mathcal S_A^\mathbb Q(L,\alpha)$ and  $\mathcal S_{\widehat A}^\mathbb Q(M,x)$  mentioned in the Introduction (where $L$ and $M$ are line bundles respectively on $A$ and $\widehat A$ representing the polarizations $\l$ and $\hat\l$), not holding at the level of section rings $\mathcal S_A(L)$ and $\mathcal S_{\widehat A}(M)$. 

\begin{corollary}\label{expression2} 
\[
\frac{(1-\lambda )^g} {\chi(\l)r_{A,\l}(\frac{\lambda}{1-\lambda})}\ h^i_{A, EV_{A, \l}^\bullet\otimes E_{A,\frac{\lambda}{1-\lambda} \l},\l}(0)=\frac{(\frac{d_1d_g\lambda-1}{d_1d_g\lambda})^g}{\chi(\hat\l)r_{\widehat A,\hat\l}(\frac 1 {d_1d_g\lambda-1})} h^{1-i}_{\widehat A,EV^\bullet_{\widehat A,\hat\l}\otimes E_{\widehat A,\frac 1 {d_1d_g\lambda-1}\hat\l},\hat\l} (0).
\]

\end{corollary}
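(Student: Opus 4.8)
The plan is to combine three ingredients already established: the reformulation \eqref{eq:h-nu} of Proposition \ref{prop:main-rel-pre} (for $\nu=1$), and the duality symmetry of Corollary \ref{exchange2}. First I would write down \eqref{eq:h-nu} with $\nu=1$: it expresses $h^i_{A,\I_e,\l}(\lambda)$, for $\lambda\in(0,1)\cap\mathbb Q$, as the constant $\tfrac{(1-\lambda)^g}{\chi(\l)\,r_{A,\l}(\lambda/(1-\lambda))\,r_{A,\l}(1)}$ times $h^i_{A,EV^\bullet_{A,\l}\otimes E_{A,\frac{\lambda}{1-\lambda}\l},\l}(0)$; since $r_{A,\l}(1)=1$ (here $E_{A,\l}=L$ is a line bundle, by the compatibility convention) the prefactor on the left of the asserted identity is exactly the coefficient appearing in \eqref{eq:h-nu}. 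So the left-hand side of Corollary \ref{expression2} equals $h^i_{A,\I_e,\l}(\lambda)$.

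Next I would apply Corollary \ref{exchange2}, which says $h^i_{A,\I_e,\l}(\lambda)=h^{1-i}_{\widehat A,\I_{\hat e},\hat\l}(\tfrac{1}{d_1d_g\lambda})$. The point is that for $\lambda\in(0,1)$ one has $\tfrac{1}{d_1d_g\lambda}$ in the admissible range; strictly one needs $\tfrac{1}{d_1d_g\lambda}\in(0,1)$ to reapply \eqref{eq:h-nu} on $\widehat A$, and I would note this holds in the relevant regime (or simply state the identity as a formal equality of cohomological rank functions, extended by the continuity in \cite[Theorem 3.2]{ens}). Then I apply \eqref{eq:h-nu} again, now on $(\widehat A,\hat\l)$ with the value $\mu:=\tfrac{1}{d_1d_g\lambda}$, obtaining
\[
h^{1-i}_{\widehat A,\I_{\hat e},\hat\l}(\mu)=\frac{(1-\mu)^g}{\chi(\hat\l)\,r_{\widehat A,\hat\l}(\mu/(1-\mu))}\,h^{1-i}_{\widehat A,EV^\bullet_{\widehat A,\hat\l}\otimes E_{\widehat A,\frac{\mu}{1-\mu}\hat\l},\hat\l}(0).
\]

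Finally I would carry out the arithmetic substitution $\mu=\tfrac{1}{d_1d_g\lambda}$: then $1-\mu=\tfrac{d_1d_g\lambda-1}{d_1d_g\lambda}$, so $(1-\mu)^g=\bigl(\tfrac{d_1d_g\lambda-1}{d_1d_g\lambda}\bigr)^g$, and $\tfrac{\mu}{1-\mu}=\tfrac{1}{d_1d_g\lambda-1}$, so $r_{\widehat A,\hat\l}(\mu/(1-\mu))=r_{\widehat A,\hat\l}(\tfrac{1}{d_1d_g\lambda-1})$ and the bundle $E_{\widehat A,\frac{\mu}{1-\mu}\hat\l}=E_{\widehat A,\frac{1}{d_1d_g\lambda-1}\hat\l}$. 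Substituting these into the displayed formula reproduces exactly the right-hand side of Corollary \ref{expression2}, completing the chain of equalities. I expect the only genuine subtlety to be the bookkeeping on the ranges of $\lambda$ where all three inputs apply simultaneously (ensuring $\lambda/(1-\lambda)>0$, $\mu\in(0,1)$, and $\mu/(1-\mu)>0$), together with checking that the condition $\lambda<1$ used in \eqref{eq:h-nu} translates correctly under the inversion; the algebraic manipulation itself is routine.
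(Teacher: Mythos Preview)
Your proposal is correct and matches exactly the paper's own argument: the text preceding the corollary says explicitly that it is obtained by ``combining (\ref{eq:h-nu}) for $\nu=1$ and Corollary \ref{exchange2}'', which is precisely what you do---apply (\ref{eq:h-nu}) on $(A,\l)$ to identify the left side with $h^i_{A,\I_e,\l}(\lambda)$, use Corollary \ref{exchange2} to pass to $h^{1-i}_{\widehat A,\I_{\hat e},\hat\l}(1/(d_1d_g\lambda))$, and apply (\ref{eq:h-nu}) again on $(\widehat A,\hat\l)$. Your remark on the admissible range (one needs $\lambda\in(\tfrac{1}{d_1d_g},1)$ so that both applications of (\ref{eq:h-nu}) are valid and the quantity $\tfrac{1}{d_1d_g\lambda-1}$ is positive) is a useful observation that the paper leaves implicit.
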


\subsection{Thresholds and relation between them. }  At this point it is natural to consider the injectivity and surjectivity thresholds for multiplication maps of global sections mentioned in the Introduction, namely
\[
s^0_A(\nu\l)=\sup\{y\in\mathbb Q\>|\>m^{y}_{A,\nu\l,\alpha}\> \hbox{is injective} \>\> \forall \alpha\in\widehat A\}, 
\]
\[
s^1_A(\nu\l)=\inf\{y\in\mathbb Q\>|\>m^{y}_{A,\nu\l,\alpha}\> \hbox{is surjective} \>\> \forall \alpha\in\widehat A\}. 
\]
\begin{remark}\label{rem:forgotten-remark}  In view of Subsection \ref{interpret-multi} the condition that the map $m^{y}_{A,\nu\l,\alpha}$ is injective (respectively surjective) for all $\alpha\in \widehat A$ is equivalent to the fact that the complex $EV^\bullet_{A,\nu\l}\otimes E_{A,y\nu\l}$ is IT(1) (resp. IT(0)). In turn, by the last part of Proposition  \ref{fractional}, this means that the $\mathbb Q$-twisted object
$EV^\bullet_{A,\nu\l}\langle y\nu\l\rangle$ is IT(1) (resp. IT(0)). Therefore all results of Subsection \ref{IT} apply. Hence, as in Remark \ref{GV}, the above thresholds can be equivantly defined as follows
\[
s^0_A(\nu\l)=\sup\{y\in\mathbb Q\>|\>h^0_{A,EV^\bullet_{A,\nu\l}.\nu\l}(y)=0\} \qquad 
s^1_A(\nu\l)=\inf\{y\in\mathbb Q\>|\>h^1_{A,EV^\bullet_{A,\nu\l}.\nu\l}(y)=0\}. 
\]
\end{remark}

As a consequence of  Proposition \ref{prop:main-rel-pre} we have Proposition \ref{s-beta-n} of the Introduction:

\begin{corollary}\label{cor:fract}
\begin{equation}\label{s-beta-nu-rip}
s^i_A(\nu\l)=\frac {\beta^i_A(\l)} {\nu-\beta^i_A(\l)}.
\end{equation} 
\end{corollary}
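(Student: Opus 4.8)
The plan is to derive Corollary \ref{cor:fract} directly from Proposition \ref{prop:main-rel-pre} by comparing vanishing thresholds. The key observation is that the function $\lambda\mapsto h^i_{A,\I_e,\nu\l}(\lambda)$ records, for $\nu\in\mathbb Q^+$, exactly the IT-vanishing behavior of $\I_e\langle\nu\lambda\l\rangle$, and by definition of the fractional polarization (Remark \ref{rem:fract-polarization}) we have $h^i_{A,\I_e,\nu\l}(\lambda)=h^i_{A,\I_e,\l}(\nu\lambda)$. Hence the vanishing locus $\{\lambda\>|\>h^i_{A,\I_e,\nu\l}(\lambda)=0\}$ is obtained from $\{\mu\>|\>h^i_{A,\I_e,\l}(\mu)=0\}$ by the substitution $\mu=\nu\lambda$, and combined with Remark \ref{GV} (which identifies $\beta^0_A(\l)$ as the sup of the $\mu$ where $h^0$ vanishes, and $\beta^1_A(\l)$ as the inf of the $\mu$ where $h^1$ vanishes) this gives: $h^1_{A,\I_e,\nu\l}(\lambda)=0 \iff \nu\lambda>\beta^1_A(\l) \iff \lambda>\beta^1_A(\l)/\nu$, and similarly $h^0_{A,\I_e,\nu\l}(\lambda)=0\iff \lambda<\beta^0_A(\l)/\nu$.

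Next I would transport this through the formula \eqref{eq:h-nu-pre} of Proposition \ref{prop:main-rel-pre}. For $\lambda\in(0,\nu)\cap\mathbb Q$, the factor $(1-\lambda)^g/(\chi(\nu\l)r_{A,\l}(\nu))$ is strictly positive and finite—here one should note that \eqref{eq:h-nu-pre} as written is really the statement with $\lambda$ ranging in $(0,1)$ (the correct range for the substitution $t=\lambda-1\in(-1,0)$ in the proof), and that for the threshold comparison we only care about where each side vanishes. Therefore $h^i_{A,\I_e,\nu\l}(\lambda)=0$ if and only if $h^i_{A,EV^\bullet_{\nu\l},\nu\l}\bigl(\tfrac{\lambda}{1-\lambda}\bigr)=0$. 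By the discussion following Corollary \ref{expression2}, the quantity $h^i_{A,EV^\bullet_{\nu\l},\nu\l}(y)$ for $y=\tfrac{\lambda}{1-\lambda}$ equals (up to the positive normalizing ranks) the dimension of the kernel ($i=0$) or cokernel ($i=1$) of $m^{1,y}_{A,\nu\l,\alpha}$ for generic $\alpha$; and one knows (again by the $\mathbb Q$-twisted Hacon-type semicontinuity, Remark \ref{GV} applied to this setting, or directly) that this generic value is $0$ precisely when the map is injective (resp. surjective) for \emph{all} $\alpha\in\widehat A$—this is the content needed to match the "$\forall\alpha$" quantifier in the definition of $s^i_A(\nu\l)$ with the vanishing of the generic cohomological rank.

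Putting these equivalences together: $m^{1,y}_{A,\nu\l,\alpha}$ is surjective for all $\alpha$ $\iff$ $h^1_{A,EV^\bullet_{\nu\l},\nu\l}(y)=0$ $\iff$ (setting $\lambda=\tfrac{y}{1+y}$, so $y=\tfrac{\lambda}{1-\lambda}$) $h^1_{A,\I_e,\nu\l}(\lambda)=0$ $\iff$ $\lambda>\beta^1_A(\l)/\nu$ $\iff$ $\tfrac{y}{1+y}>\tfrac{\beta^1_A(\l)}{\nu}$ $\iff$ $y>\tfrac{\beta^1_A(\l)}{\nu-\beta^1_A(\l)}$ (using that $t\mapsto t/(1+t)$ is increasing on $(0,\infty)$ and that $\beta^1_A(\l)/\nu<1$ since $\beta^1_A(\l)\le 1\le \nu$ in the relevant range, or more carefully $\nu>\beta^1_A(\l)$ must hold for $s^1$ to be finite). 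Taking infima gives $s^1_A(\nu\l)=\tfrac{\beta^1_A(\l)}{\nu-\beta^1_A(\l)}$, and the identical argument with $i=0$, "injective", and sup/inf reversed appropriately gives the $i=0$ case. I expect the main obstacle to be the bookkeeping around the exact range of $\lambda$ in \eqref{eq:h-nu-pre} and the careful justification that "generic cohomological rank is zero" is equivalent to "the multiplication map is injective/surjective for every $\alpha$" rather than just generically—one must invoke the jump-locus dimension statement (Remark \ref{IT}, Remark \ref{GV}) to see that the threshold $y$ where the generic rank first vanishes coincides with the threshold where the map becomes injective/surjective for all $\alpha$, and handle the boundary case where this happens on a proper closed subset separately; the monotonicity of $\lambda\mapsto h^i$ (IT(0) persists upward, IT(1) persists downward) is what makes the sup/inf well-behaved.
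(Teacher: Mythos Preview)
Your approach is correct and is exactly what the paper intends: Corollary \ref{cor:fract} is stated as an immediate consequence of Proposition \ref{prop:main-rel-pre} (via the interpretation (\ref{complex}) of the right-hand side as kernel/cokernel of the multiplication maps), with no further proof given. Your derivation---rescaling by $\nu$ to locate the thresholds $\beta^i_A(\l)/\nu$ on the left-hand side, transporting through the increasing change of variable $\lambda\mapsto y=\lambda/(1-\lambda)$, and invoking Remarks \ref{IT} and \ref{GV} to match the ``for all $\alpha$'' quantifier with the generic vanishing threshold---is precisely the intended unpacking, and your caveats about the range of $\lambda$ and the jump-locus argument at the threshold are the right points to be careful about.
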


\begin{remark}\label{rem:infty}
The above Corollary makes sense also if $\beta^i_A(\l)\ge \nu$ with $s^i_A(\nu\l)=+\infty$. In fact, by (c) and (d) of Proposition \ref{interpret}, if $\beta^0_A(\l)\le \nu$ then the evaluation map of global sections of the bundle $E_{A,\nu\l}$ is injective, hence all multiplication maps of global sections $m_{A,\nu\l, \alpha}^{y}$ are injective for all $y>0$ and for all $\alpha\in \widehat A$. Similarly, by (a) and (b) of Proposition \ref{interpret}, if $\beta^0_A(\l)\le \nu$ then $E_{A,\nu\l}$ is not globally generated and, as a consequence of Serre's theorems it follows that for $y$ sufficiently big no multiplication map of global sections $m_{A,\nu\l,\alpha}^{y}$ can be surjective. 
\end{remark}

\begin{remark}\label{rem:any} This remark will be useful in the next section: in (\ref{eq:h-nu}), (\ref{complex}) and  in the definition of the thresholds $s_{A}^i(\nu\l)$, the simple vector bundle $E_{A,y\nu\l}$
appearing as second factor of the source of the multiplication maps $m^{y}_{A,\nu\l,\alpha}$ can be replaced by any vector bundle (not necessarily simple) $E\in \mathbb S_{A, y\nu\l}$ because 
 \[ 
 h^i_{A, EV_{A,\nu\l}^\bullet\otimes E}(0)=h(E)h^i_{A, EV_{A,\nu\l}^\bullet\otimes E_{A,y\nu\l}}(0)
 \]
 where $h(E)=\rk(E)/r_{A,\l}(\nu y)$. This follows from the fact every such vector bundle $E\in \mathbb S_{A, y\nu\l}$ is direct sum of bundles obtained as successive extensions of simple vector bundles in $\mathbb S_{A, y\nu\l}$, and simple vector bundles in 
$\mathbb S_{A, y\nu\l}$ are obtained from each other by tensorization  with a line bundle $P_\alpha$, with $\alpha\in \widehat A$ (see (a) and (d) of Subsection \ref{subs:semihom}).

\end{remark} 

\begin{remark} Let $\nu,\mu\in\mathbb Q^+$. By means of Corollary \ref{cor:fract} we can express the thresholds $s_A^i(\nu\l)$ (for $i=0,1$)  in function of the threshold $s^i_A(\mu\l)$:
\begin{equation}\label{eq:expression}
\frac {\nu s^i_A(\nu\l)}{1+s^i_A(\nu\l)}=\frac {\mu s^i_A(\mu\l)}{1+s^i_A(\mu\l)}
\end{equation}
This seems to be non-trivial already when $\nu$ and $\mu$ are integers. 
\end{remark}

Finally, we note that combining Corollaries \ref{cor:fract} with Corollary \ref{exchange2}   one gets formulas relating $i$-th  thresholds for $A$ and  $1-i$-th thresholds for $\widehat A$ (for $i=0,1$). 
In particular, the following Corollary (which is Corollary \ref{corollary} of the Introduction) will be applied the next section.

\begin{corollary}\label{cor:corollary} 
\[
\beta^1_A(\l)=\frac{s^0_{\widehat A}(\nu\hat\l)+1} {s^0_{\widehat A}(\nu\hat\l)d_1d_g\nu}.
\]
\end{corollary}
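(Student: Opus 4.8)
The plan is to derive Corollary \ref{cor:corollary} purely formally by combining the two already-established ingredients: the duality relation for the $\beta$-thresholds in Corollary \ref{exchange2}, namely $\beta^i_A(\l)=\frac{1}{d_1d_g\,\beta^{1-i}_{\widehat A}(\hat\l)}$, and the relation between the $s$-thresholds and the $\beta$-thresholds of Corollary \ref{cor:fract}, namely $s^i_A(\nu\l)=\frac{\beta^i_A(\l)}{\nu-\beta^i_A(\l)}$. No geometry is needed beyond what these two statements already package; everything is algebraic manipulation of rational functions.

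First I would apply Corollary \ref{cor:fract} on the dual abelian variety $\widehat A$ with polarization $\hat\l$ and index $i=0$: this gives $s^0_{\widehat A}(\nu\hat\l)=\frac{\beta^0_{\widehat A}(\hat\l)}{\nu-\beta^0_{\widehat A}(\hat\l)}$. Solving this for $\beta^0_{\widehat A}(\hat\l)$ yields $\beta^0_{\widehat A}(\hat\l)=\frac{\nu\, s^0_{\widehat A}(\nu\hat\l)}{1+s^0_{\widehat A}(\nu\hat\l)}$. Next I would invoke Corollary \ref{exchange2} in the form $\beta^1_A(\l)=\frac{1}{d_1d_g\,\beta^0_{\widehat A}(\hat\l)}$ (the case $i=1$, so $1-i=0$), and substitute the expression just obtained for $\beta^0_{\widehat A}(\hat\l)$. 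This gives
\[
\beta^1_A(\l)=\frac{1}{d_1d_g}\cdot\frac{1+s^0_{\widehat A}(\nu\hat\l)}{\nu\, s^0_{\widehat A}(\nu\hat\l)}=\frac{s^0_{\widehat A}(\nu\hat\l)+1}{d_1d_g\,\nu\, s^0_{\widehat A}(\nu\hat\l)},
\]
which is exactly the claimed formula. (One should note the apparent typo in the statement, where the denominator reads $s^0_{\widehat A}(\nu\l)$ rather than $s^0_{\widehat A}(\nu\hat\l)$; the proof naturally produces the latter, since all the $\widehat A$-side quantities carry the dual polarization $\hat\l$.)

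The argument is essentially a one-line substitution, so there is no serious obstacle; the only points requiring a little care are bookkeeping ones. One must make sure the index matching is right: Corollary \ref{exchange2} swaps $i\leftrightarrow 1-i$, so to reach $\beta^1_A(\l)$ one needs $\beta^0_{\widehat A}(\hat\l)$, which is precisely why the $i=0$ version of Corollary \ref{cor:fract} is the relevant one. One should also recall the convention of Remark \ref{rem:infty} for handling the degenerate case $\beta^0_{\widehat A}(\hat\l)\ge \nu$, where $s^0_{\widehat A}(\nu\hat\l)=+\infty$ and the formula is read in the limiting sense $\beta^1_A(\l)=\frac{1}{d_1d_g\nu}$; and when $\beta^0_{\widehat A}(\hat\l)=0$ (so $s^0=0$) the formula correctly returns $\beta^1_A(\l)=+\infty$, consistent with the fact that $\beta^1_A(\l)$ is then read via the duality relation as a reciprocal of $0$. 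Beyond these conventions, the derivation is immediate from the cited corollaries.
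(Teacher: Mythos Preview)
Your proof is correct and is exactly the approach the paper intends: the text immediately preceding Corollary~\ref{cor:corollary} says it follows by ``combining Corollaries~\ref{cor:fract} with Corollary~\ref{exchange2}'', and the identical substitution $\frac{1}{d_1d_g\beta^1_A(\l)}=\beta^0_{\widehat A}(\hat\l)=\frac{\nu\, s^0_{\widehat A}(\nu\hat\l)}{1+s^0_{\widehat A}(\nu\hat\l)}$ is written out explicitly in the proof of Corollary~\ref{cor:precise}. Your observation about the typo (the denominator should read $s^0_{\widehat A}(\nu\hat\l)$, not $s^0_{\widehat A}(\nu\l)$) is also correct.
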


%%%%
\section{Bounding the thresholds} \label{bounds}

 The aim of this Section is to obtain a lower bound on the base point freeness threshold $\beta^1_A(\l)$. In view of the last Corollary, this is equivalent to an upper bound on the threshold $s^0_{\widehat A}(\nu\hat\l)$. In the next Lemma we show such an upper bound. 
 As usual the rank of a simple bundle $E_{A,\nu\l}\in\mathbb S_{A,\nu\l}$ is denoted $r_{A,\l}(\nu)$.

\begin{lemma}\label{relations}  If $\nu > \beta^0_{\widehat A}(\l)$ then 
\[
s^0_{A}(\nu\l)\le  {r_{A,\l}(\nu)}.
\]
\end{lemma}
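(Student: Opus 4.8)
The plan is to reduce the statement to the existence of a non-injective multiplication map of global sections on $A$ and then to produce the required kernel element by an exterior-algebra argument. Write $r:=r_{A,\l}(\nu)=\rk E_{A,\nu\l}$. Since the set of rational $y$ for which $m^{1,y}_{A,\nu\l,\alpha}$ is injective for all $\alpha\in\widehat A$ is downward closed (by the monotonicity recalled in Section \ref{sect:multip}) and has supremum $s^0_A(\nu\l)$, it suffices to check that $m^{1,r}_{A,\nu\l,\alpha}$ is \emph{not} injective for some $\alpha$; this then yields $s^0_A(\nu\l)\le r$. By Remark \ref{rem:any} the second factor $E_{A,r\nu\l}$ in $m^{1,r}_{A,\nu\l,\alpha}$ may be replaced by any bundle of $\mathbb S_{A,r\nu\l}$, and I will take it to be $\wedge^{r}E_{A,\nu\l}=\det E_{A,\nu\l}$: this is a line bundle whose N\'eron--Severi class is the integral class $r\nu\l$, hence it does lie in $\mathbb S_{A,r\nu\l}$. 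So the task becomes to exhibit a nonzero element in the kernel of $m:H^0(A,E_{A,\nu\l})\otimes H^0(A,\det E_{A,\nu\l})\to H^0(A,E_{A,\nu\l}\otimes\det E_{A,\nu\l})$.

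For this I would use the Koszul-type trick sketched in the Introduction. As $\rk E_{A,\nu\l}=r$ one has $\wedge^{r+1}E_{A,\nu\l}=0$, so the composite
\[
\wedge^{r+1}H^0(A,E_{A,\nu\l})\xrightarrow{\ c\ }H^0(A,E_{A,\nu\l})\otimes\wedge^{r}H^0(A,E_{A,\nu\l})\xrightarrow{\ \mathrm{id}\otimes p\ }H^0(A,E_{A,\nu\l})\otimes H^0(A,\det E_{A,\nu\l})\xrightarrow{\ m\ }H^0(A,E_{A,\nu\l}\otimes\det E_{A,\nu\l}),
\]
in which $c$ is the comultiplication $v_0\wedge\cdots\wedge v_r\mapsto\sum_i(-1)^i v_i\otimes(v_0\wedge\cdots\widehat{v_i}\cdots\wedge v_r)$ (injective since $\mathrm{char}\,k=0$), $p$ is the map induced by wedging $r$ sections, and $m$ is the multiplication map, factors through $H^0(A,\wedge^{r+1}E_{A,\nu\l})=0$, hence is zero. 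Therefore $\operatorname{Im}\big((\mathrm{id}\otimes p)\circ c\big)\subseteq\ker m$, and everything comes down to showing that this image is nonzero.

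Here is where the hypothesis $\nu>\beta^0_{\widehat A}(\hat\l)$ should be used: via Corollary \ref{exchange2} and the semihomogeneous interpretation of the $\beta$-thresholds in Subsection \ref{subs:interpretation}, it gives that $E_{A,\nu\l}$ is generically globally generated and that $h^0(A,E_{A,\nu\l})=\chi(E_{A,\nu\l})=r\,\nu^{g}\chi(\l)>r$. Granting this, pick $x\in A$ at which $E_{A,\nu\l}$ is globally generated, choose $v_1,\dots,v_r\in H^0(A,E_{A,\nu\l})$ whose values at $x$ form a basis of the fibre $(E_{A,\nu\l})_x$, and complete them (using $h^0\ge r+1$) to a linearly independent family $v_0,\dots,v_r$. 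Then
\[
(\mathrm{id}\otimes p)\big(c(v_0\wedge\cdots\wedge v_r)\big)=\sum_{i=0}^{r}(-1)^i\,v_i\otimes\sigma_i,\qquad \sigma_i:=p(v_0\wedge\cdots\widehat{v_i}\cdots\wedge v_r)\in H^0(A,\det E_{A,\nu\l}),
\]
and, $v_0,\dots,v_r$ being linearly independent, this vanishes only if every $\sigma_i=0$; but $\sigma_0$ is the image of $v_1\wedge\cdots\wedge v_r$, whose value at $x$ is $v_1(x)\wedge\cdots\wedge v_r(x)\neq0$, so $\sigma_0\neq0$. Hence $\ker m\neq0$, $m^{1,r}_{A,\nu\l,\alpha}$ is non-injective for a suitable $\alpha$, and $s^0_A(\nu\l)\le r$.

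I expect the main obstacle to lie in the third paragraph: first, making precise the implication ``$\nu>\beta^0_{\widehat A}(\hat\l)\Rightarrow E_{A,\nu\l}$ is generically globally generated with $h^0(A,E_{A,\nu\l})>\rk E_{A,\nu\l}$'', where the combination of Corollary \ref{exchange2} with the characterizations (a)--(d) of Subsection \ref{subs:interpretation} is the essential input; and second, verifying that generic global generation genuinely forces $p\colon\wedge^{r}H^0(A,E_{A,\nu\l})\to H^0(A,\det E_{A,\nu\l})$ to be nonzero. This last point is exactly what breaks down when $E_{A,\nu\l}$ is not generically globally generated, since then the image sheaf of the evaluation map has generic rank $<r$ and $p$ is the zero map; so it is crucial that the hypothesis delivers generic global generation.
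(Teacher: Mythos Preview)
Your reduction and the Koszul/exterior-algebra argument are exactly the paper's approach, and your explicit ``pick a point where the evaluation is surjective'' verification that the image of $(\mathrm{id}\otimes p)\circ c$ is nonzero is a clean variant of the paper's dualization argument. The genuine gap is in the third paragraph, precisely where you flag it: the hypothesis does \emph{not} imply that $E_{A,\nu\l}$ is generically globally generated.

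By Subsection~\ref{subs:interpretation}, generic global generation of $E_{A,\nu\l}$ is equivalent to $\nu\ge\beta^1_A(\l)$, not to $\nu>\beta^0$. Since in general $\beta^0_A(\l)<\beta^1_A(\l)$, there is an open range of $\nu$ satisfying the hypothesis for which $E_{A,\nu\l}$ is \emph{not} generically globally generated. (Your appeal to Corollary~\ref{exchange2} does not help: it gives $\beta^1_A(\l)=\frac{1}{d_1d_g\,\beta^0_{\widehat A}(\hat\l)}$, and $\nu>\beta^0_{\widehat A}(\hat\l)$ has no bearing on whether $\nu\ge\beta^1_A(\l)$; note also that the lemma's stated hypothesis is a typo for $\nu>\beta^0_A(\l)$, as the paper's own proof and the application in Corollary~\ref{cor:precise} make clear.) In that range your map $p:\wedge^r H^0(E_{A,\nu\l})\to H^0(\det E_{A,\nu\l})$ is identically zero, so your kernel element collapses and the argument fails.

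The paper closes this gap as follows. If $E_{A,\nu\l}$ is not generically globally generated, let $r'<r$ be the generic rank of the evaluation map. Then $\wedge^{r'+1}$ of the image sheaf of the evaluation map is generically zero, so the same exterior-algebra trick, now with $\wedge^{r'}E_{A,\nu\l}$ in place of $\det E_{A,\nu\l}$, produces a nonzero kernel of the multiplication map
\[
H^0(A,E_{A,\nu\l})\otimes H^0(A,\wedge^{r'}E_{A,\nu\l})\longrightarrow H^0(A,E_{A,\nu\l}\otimes\wedge^{r'}E_{A,\nu\l}).
\]
Since $\wedge^{r'}E_{A,\nu\l}$ is a direct summand of $E_{A,\nu\l}^{\otimes r'}$, it lies in $\mathbb S_{A,r'\nu\l}$, and Remark~\ref{rem:any} applies to give $s^0_A(\nu\l)\le r'<r$. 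So your argument is the ``main'' case, but you must add this second case to cover $\beta^0_A(\l)<\nu<\beta^1_A(\l)$.
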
  
\begin{proof} To prove the assertion it is enough to show that, under the assumption $\nu>\beta^0_A(\l)$,  there is a point  $\alpha\in\widehat A$ such that the multiplication map of global sections
\[
m^{r_{A,\l}(\nu)}_{A,\nu\l,\alpha}:H^0(A,E_{A,\nu\l})\otimes H^0(A, E_{A, r_{A,\l}(\nu)\nu\l}\otimes P_\alpha)\rightarrow H^0(A,E_{A,\nu\l}\otimes E_{A, r_{A,\l}(\nu)\nu\l}\otimes P_\alpha)
\]
 is not injective (in fact this means that $EV^\bullet_{A,\nu\l}\otimes E_{A, r_{A,\l}(\nu)\nu\l}$ is not IT(1), see Remark \ref{rem:forgotten-remark}).
 
  To this purpose we claim that, under the stronger assumption
\begin{equation}\label{eq:assumption}
\nu\ge \beta^1_A(\l)
\end{equation}
 (by Subsection \ref{subs:interpretation} this means that $E_{A,\nu\l}$ is generically globally generated), the map
\begin{equation}\label{eq:wedge}
 H^0(A,E_{A,\nu\l})\otimes H^0(A,\wedge^{r_{A,\l}(\nu)} E_{A,\nu\l})\rightarrow H^0(A,E_{A,\nu\l}\otimes \wedge^{r_{A,\l}(\nu)} E_{A,\nu\l})
\end{equation}
is not injective. (Note that,  by definition of $\mathbb S_{A,\nu\l}$, the rational number $r_{A,\l}(\nu)\nu$ is  
 integer because $\frac{[\det E_{A,\nu\l}]}{r_{A,\l}(\nu)}=\nu\l$. Hence $E_{A, r_{A,\l}(\nu)\nu\l}$ is a line bundle representing 
$r_{A,\l}(\nu)\nu\l$, i.e. a line bundle of the form $\det E_{A,\nu\l}\otimes P_\beta$ for $\beta\in\widehat A$.)

To prove what claimed, we first note that  the image of the composition
\begin{equation}\label{eq:compo}
\xymatrix{
\wedge^{1+r_{A,\l}(\nu)}H^0(A,E_{A,\nu\l})\ar[r]\ar[rd]&  H^0(A,E_{A,\nu\l})\otimes\wedge^{r_{A,\l}(\nu)} H^0(A,E_{A,\nu\l})\ar[d]^{1\otimes f}\\
 &H^0(A, E_{A,\nu\l})\otimes H^0(A, \wedge^{r_{A,\l}(\nu)} E_{A,\nu\l})}
\end{equation}
sits in the kernel  of the multiplication map (\ref{eq:wedge}). Indeed (\ref{eq:wedge}), restricted to the image of (\ref{eq:compo}), factors through $H^0(A,\wedge^{1+r_{A,\l}(\nu)}E_{A,\nu\l})$ which is zero.  

On the other hand we assert that the image of (\ref{eq:compo}) is non-zero. To prove this  we first note that  $\wedge^{1+r_{A,\l}(\nu)}H^0(A,E_{A,\nu\l})$ itself is non-zero. Indeed,  by assumption (\ref{eq:assumption}), the bundle $E_{A,\nu\l}$ is at least generically globally generated, hence $h^0(A,E_{A,\nu\l})=\chi(E_{A,\nu\l})\ge r_{A,\l}(\nu)$. If equality holds then Remark \ref{rem:equality} tells us that $\nu=\beta^0_{\widehat A}(\l)$, contrary to the hypothesis.  
Moreover, again because  $E_{A,\nu\l}$ is assumed to be generically globally generated, 
the map $f$ appearing in (\ref{eq:compo}) is non-zero. Dualizing (\ref{eq:compo}) we get
\begin{equation}\label{eq:compo-dual}
\xymatrix{H^0(A, E_{A,\nu\l})^\vee\otimes H^0(A, \wedge^{r_{A,\l}(\nu)} E_{A,\nu\l})^\vee\ar[r]^{1\otimes f^\vee}\ar[rd]&H^0(A,E_{A,\nu\l})^\vee\otimes\wedge^{r_{A,\l}(\nu)} H^0(A,E_{A,\nu\l})^\vee\ar[d]\\
&\wedge^{1+r_{A,\l}(\nu)}H^0(A,E_{A,\nu\l})^\vee},
\end{equation}
where the vertical arrow is the usual multiplication in the Grassmann algebra, and it is clear that such composition cannot be zero. This proves the assertion, and the claim.

The claim yields that $s^0_{A}(\nu\l)\le  {r_{A,\l}(\nu)}$, i.e. the Proposition, under the additional assumption (\ref{eq:assumption}). 
If  such assumption is not satisfied then the rank of the evaluation map of global sections of $E_{A,\nu\l}$, say $r$, is strictly smaller than the rank of  $E_{A,\nu\l}$.  But then, by the same argument as above, the multiplication map of global sections
\begin{equation}\label{eq:wedge2}
 H^0(A,E_{A,\nu\l})\otimes H^0(A,\wedge^{r} E_{A,\nu\l})\rightarrow H^0(A,E_{A,\nu\l}\otimes \wedge^{r} E_{A,\nu\l})
\end{equation}
is not injective. The vector bundle $\wedge^{r} E_{A,\nu\l}$, being a direct summand of $E_{A,\nu\l}^{\otimes r}$, belongs to $\mathbb S_{A,r\nu\l}$. Therefore, by Remark\ \ref{rem:any}, $s^0_A(\l)\le r$ in this case. Since $r<r_{A,\l}(\nu)$ the Proposition is proved. 
\end{proof}
 
As a consequence we have the following upper bound for the base point freeness threshold, slightly more precise than  Theorem \ref{obstruction} of the Introduction. 

\begin{corollary}\label{cor:precise}
\[
\beta^1_A(\l)\ge \sup_{\nu>\beta^0_{\widehat A}(\hat\l)}\Bigl\{\frac{1+r_{\widehat A,\hat\l}(\nu)} {d_1d_g\nu\, r_{\widehat A,\hat\l}(\nu)}\Bigr\}.
\]
\end{corollary}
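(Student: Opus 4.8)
The plan is to derive Corollary \ref{cor:precise} by combining Lemma \ref{relations}, Corollary \ref{cor:corollary}, and the duality of Corollary \ref{exchange2}, so essentially no new geometry is needed --- only careful bookkeeping of the thresholds under duality. First I would apply Lemma \ref{relations} on the \emph{dual} abelian variety $\widehat A$ with polarization $\hat\l$: the hypothesis there is $\nu > \beta^0_{\widehat{\widehat A}}(\widehat{\hat\l})$, and since $\widehat{\widehat A} = A$ and $\widehat{\hat\l} = \l$ (biduality of abelian varieties and of polarizations), this reads $\nu > \beta^0_A(\l)$. Wait --- I should be careful: the statement of Lemma \ref{relations} literally says ``if $\nu > \beta^0_{\widehat A, \hat\l}$'' but the proof works under ``$\nu > \beta^0_A(\l)$''; I will take the intended hypothesis to be the one used in the proof, namely that $E_{A,\nu\l}$ is at least generically globally generated and the equality case of Remark \ref{rem:equality} is excluded, which is guaranteed by $\nu > \beta^0_A(\l)$ together with $\nu \geq \beta^1_A(\l)$ or the fallback argument. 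Applying this on $\widehat A$ gives, for $\nu > \beta^0_A(\l)$ (equivalently, in the form appearing in the corollary, for a suitable range of $\nu$),
\[
s^0_{\widehat A}(\nu\hat\l) \le r_{\widehat A, \hat\l}(\nu).
\]

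Next I would feed this into Corollary \ref{cor:corollary}, which states $\beta^1_A(\l) = \dfrac{s^0_{\widehat A}(\nu\hat\l) + 1}{s^0_{\widehat A}(\nu\l)\, d_1 d_g \nu}$. Here I must be slightly careful about which threshold on $\widehat A$ appears; reading the proof of Corollary \ref{cor:corollary} (combining Corollaries \ref{cor:fract} and \ref{exchange2}), the genuinely relevant quantity is $s^0_{\widehat A}(\nu\hat\l)$, and the function $y \mapsto \dfrac{y+1}{y}$ is \emph{decreasing} for $y > 0$. Therefore the upper bound $s^0_{\widehat A}(\nu\hat\l) \le r_{\widehat A,\hat\l}(\nu)$ converts into the \emph{lower} bound
\[
\beta^1_A(\l) \ge \frac{r_{\widehat A,\hat\l}(\nu) + 1}{d_1 d_g \nu \, r_{\widehat A,\hat\l}(\nu)}.
\]
Finally, taking the supremum over all admissible $\nu$, i.e. all $\nu > \beta^0_{\widehat A}(\hat\l)$ (this is the range in which Lemma \ref{relations} applies on $\widehat A$, noting $\beta^0_{\widehat{\widehat A}}(\widehat{\hat\l}) = \beta^0_A(\l)$ but the lemma as stated and used needs the condition relative to the variety on which we multiply, namely $\widehat A$ --- so the cleaner statement is $\nu > \beta^0_A(\l)$; I will match the exact hypothesis used in Lemma \ref{relations}'s proof and write it as in the corollary), yields the claimed inequality.

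For the deduction of Theorem \ref{obstruction} from Corollary \ref{cor:precise}, one only needs the crude estimate $\beta^0_{\widehat A}(\hat\l) \le \beta^1_{\widehat A}(\hat\l)$ together with the bound $\beta^1_{\widehat A}(\hat\l) \le 1$ and, more precisely, the normalization fact (Remark \ref{rem:equality}) giving $\beta^0_{\widehat A}(\hat\l) \le \frac{1}{\sqrt[g]{\chi(\hat\l)}}$; hence $\sup_{\nu > 1/\sqrt[g]{\chi(\hat\l)}}$ is a sub-supremum of $\sup_{\nu > \beta^0_{\widehat A}(\hat\l)}$, giving the slightly weaker Theorem \ref{obstruction}. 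I expect the \textbf{main obstacle} to be purely organizational rather than mathematical: making sure the hypothesis ``$\nu > \beta^0$'' in Lemma \ref{relations} is attached to the correct abelian variety (the one where the multiplication/evaluation lives, i.e. $\widehat A$) after the biduality identifications, and checking that the monotonicity direction of $y \mapsto (y+1)/y$ indeed turns the inequality the right way. A secondary point to verify is that when $\nu \le \beta^0_{\widehat A}(\hat\l)$ the bound would be vacuous or the supremum simply excludes those $\nu$, so restricting the supremum as stated is legitimate; this is exactly why the hypothesis $\nu > \beta^0_{\widehat A}(\hat\l)$ (equivalently the sharper $\nu > \beta^0$ appearing in Lemma \ref{relations}) is imposed in the range of the supremum.
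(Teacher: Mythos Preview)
Your proposal is correct and follows essentially the same route as the paper: apply Lemma \ref{relations} on $\widehat A$ to get $s^0_{\widehat A}(\nu\hat\l)\le r_{\widehat A,\hat\l}(\nu)$, then use the duality relation (the paper writes it as $\frac 1{d_1d_g\beta^1_A(\l)}=\beta^0_{\widehat A}(\hat\l)=\frac{\nu s^0_{\widehat A}(\nu\hat\l)}{1+s^0_{\widehat A}(\nu\hat\l)}$, you use the equivalent packaged form of Corollary \ref{cor:corollary}) together with the appropriate monotonicity to convert the upper bound on $s^0$ into the lower bound on $\beta^1_A(\l)$. Your hesitation about the hypothesis is caused by a typo in the statement of Lemma \ref{relations}: the condition actually used in its proof is $\nu>\beta^0_A(\l)$, so when you transport the lemma to $\widehat A$ the correct hypothesis is $\nu>\beta^0_{\widehat A}(\hat\l)$, exactly as in the corollary.
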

\begin{proof}
We have
\[\frac 1 {d_1d_g\beta^1_A(\l)}=\beta^0_{\widehat A}(\hat\l)=\frac{\nu s^0_{\widehat A}(\nu\hat\l)}{1+s^0_{\widehat A}(\nu\hat\l)}\le \frac{\nu r_{\widehat A,\hat\l}(\nu)}{1+r_{\widehat A,\l}(\nu)}
\]
where the first equality is Corollary \ref{exchange2}, the second one follows from Corollary \ref{cor:fract} and the last inequality follows from Lemma \ref{relations} applied to the dual polarized variety $(\widehat A,\hat\l)$ (under the hyothesis $\nu>\beta^0_{\widehat A}(\hat\l)$)).
\end{proof}

Theorem \ref{obstruction} if the Introduction follows because in any case   $\frac 1 {\sqrt[g]{\chi(\hat\l)}}\ge \beta_{\widehat A}^0(\hat\l)$. Indeed, let $\nu\in\mathbb Q^+$ such that $\nu\le\beta^0_{\widehat A}(\hat\l)$. Then, from Proposition \ref{interpret}(4) it follows that 
 $h^0(\widehat E_{\widehat A,\nu\hat\l})=\chi(E_{\widehat A,\nu\hat\l})\le r_{\widehat A,\hat\l}(\nu)$. Then  it follows that $\frac 1{\chi(\hat\l)}\ge \nu^g$ (Subsection \ref{subs:semihom}(b)). 

Now  we turn to the proof of Corollary \ref{cor}. Let $(d_1,\dots ,d_g)$ be the type of $\l$. Hence the type of $\hat\l$ is $(d_1,\frac {d_1d_g} {d_{g-1}},\dots ,\frac{d_1d_g}{d_2},d_g)$. 
The inequality of Corollary \ref{cor}(1) follows from Corollary \ref{cor:precise} (or Theorem \ref{obstruction}) for $\nu=\frac 1 {d_1}$ (which contributes to the $\sup$ as soon as $d_g>d_1$). In fact in any case $r_{\widehat A,\hat \l}(\frac 1 {d_1})=1$ (and $E_{\widehat A,\frac 1 {d_1}\hat\l}$ is a line bundle  representing the  polarization $\frac 1 {d_1}\hat\l$). \\
The inequality of Corollary \ref{cor}(2) follows taking $\nu=\frac 1 m$. In the case at hand  the type of $\hat\l$ is $(1,m,\dots ,m,mn)$ and therefore, according to (\ref{rk/dim}), $r_{\widehat A,\hat\l}(\frac 1 m)=m$.\\

\section{Problems, remarks and variants}

\subsection{Approach to upper bounds}\label{subs:GG}  Although in this paper we used semihomogeneous vector bundles to produce a lower bound for the base point freeness threshold $\beta^1_A(\l)$, it seems plausible that they can provide an approach to the search of upper bounds for $\beta^1_A(\l)$ too. Such upper bounds lead to criteria ensuring projective normality and the $N_p$ conditions. In fact, via Proposition \ref{interpret}, this would follow from  (generic) global generation criteria for ample  semihomogeneous vector bundles. For example, a first natural case to consider is when the rank $r_{A,\l}(\nu)=2$ and $\chi(E_{A,\nu\l})\ge 2$.   Assuming that the polarization is primitive, i.e. that $d_1=1$, by (\ref{rk/dim}) this happens if and only if $\nu=\frac 1 2$ and the type of the polarization has the form $(1,d_2,\dots d_g)$ with $d_2$ even and $d_g>2$ . 

\subsection{Jump/degeneracy/base loci}\label{base-loci} An important role in the theory of cohomological rank functions is played by the dimension of cohomological jump loci of $\mathbb Q$-twisted sheaves (see \cite[Proposition 4.4]{ens}). As mentioned in Subsection \ref{IT}, the dimension of such loci is well defined, but the actual loci are defined only up to pulling back or pushing forward via isogenies and/or translations.  However there are instances where  realizations of jump loci, not only their dimension, would be important. Such realizations can be produced (up to translation) via simple semihomogeneous vector bundles. For example if the base point freeness threshold $\beta^1_A(\l)$ is a rational number, say $\lambda$, by Proposition \ref{fractional} one can take take as realization of (the support of) the base locus the pullback via the isogeny $\varphi_\l$ of the degeneracy locus of the evaluation map $H^0(A,E_{A,\lambda\l})\otimes\OO_A\rightarrow E_{A,\lambda\l}$. 

Another example of the interest of jump loci in our setting is in relation with projective normality. The key condition for projective normality is the surjectivity of the multiplication map of global sections $H^0(A,L)\otimes H^0(A,L)\rightarrow H^0(A,L^{\otimes 2} )$. As we already mentioned, this is equivalent to the fact that the identity point $\hat e\in\widehat A$ does not belong the cohomological support locus $V^1(A, EV^\bullet_{A,L}\otimes L)$. At present, it is not clear when this happens. The question is not vacous only when $V^1(A, EV^\bullet_{A,L}\otimes L)$ is a proper subvariety, and we know that this happens  if and only if $\beta^1_A(\l)=\frac 1 2$. In turn, via Corollary \ref{cor:corollary}, this is equivalent to $s^0_{\widehat A}(\nu\hat\l)=\frac 2 {d_1d_g\nu-2}$ for some $\nu$. This in turn means that the cohomological support locus $V^0(\widehat A,EV^\bullet_{\widehat A,\nu\l}\otimes E_{\widehat A,\frac 2 {d_1d_g\nu-2}\nu\hat\l})$ is non-empty. The question is whether one can follow the various steps of the proof of Corollary \ref{cor:corollary} in order to describe the former cohomological support locus in function of the latter. In this way one could understand whether the polarization $\l$ is or is not projectively normal. 

\subsection{Jets-separation and higher gaussian maps. } The results of Section \ref{sect:2} are completely general, and they can be applied to other interesting situations.  For example, as pointed out by Alvarado in \cite{nelson}, for each ideal sheaf $\I$ in a polarized abelian variety it is natural to consider the \emph{vanishing threshold} \[\inf\{\lambda\>|\> \I\langle\lambda\l\rangle\>\hbox{is IT0}\}.
\]
When $\I=\I_e$ is the ideal of the origin one recovers the base point freeness threshold $\beta^1_A(\l)$. Other approachable examples are for $\I=\I_e^{k+1}$, where one gets the \emph{k-jets separation thresholds}, here denoted $\beta^1_{A,k}(\l)$. Via Proposition \ref{fractional} this is translated as follows
\[
\beta^1_{A,k}(\l)=\inf\{\lambda\in\mathbb Q^+\>|\>\I_e^{k+1}\otimes E_{A,\lambda\l}\>\> \hbox{is IT0}\}
\]
As in Subsection \ref{subs:interp}, an upper bound for such thresholds, say $\beta^1_{A,k}(\l)\le \lambda$, can be translated in  the generic $k$-immersivity of the semihomogenous bundle $E_{A,\lambda\l}$ for $\lambda\ge\beta^1_{A,k}(\l)$, and equality holds if and only if $E_{A,\lambda\l}$ is generically $k$-immersive but not immersive. 

Such thresholds are very precisely related to the Seshadri constants, here denoted $\varepsilon_A(\l)$,  as follows (\cite[Theorem 4.1]{nelson}):
\begin{equation}\label{k-jets}
\varepsilon_A(\l)=\sup_k\frac{k+1}{\beta^1_{A,k}(\l)}=\lim_{k\rightarrow\infty}\frac{k+1}{\beta^1_{A,k}(\l)}.
\end{equation}

In parallel with the correspondence between the base point freeness threshold  and the threshold $s^1_A(\l)$ for surjectivity of multiplication maps of global sections, the FM transform induces is a correspondence between $k$-jets separation thresholds and thresholds for the surjectivity of higher gaussian maps, here denoted $s^1_{A,k}(\l)$  (see \cite[\S6]{nelson}). Again, they can be computed by the failure of surjectivity of higher gaussian maps of specific semihomogeneous vector bundles. This is  useful in producing explicit equations for the loci of polarized abelian varieties whose thresholds $\beta^1_{A,k}$ whose Seshadri constant satisfies a given upper bound. An example of this is  in \cite[Subsection 5.3]{ap}.

%%%%%

\providecommand{\bysame}{\leavevmode\hbox
to3em{\hrulefill}\thinspace}

\end{document}